\documentclass[11pt,reqno]{amsart}

\usepackage[T1]{fontenc}
\usepackage{lmodern}
\usepackage{microtype}
\usepackage{amsmath,amssymb,amsthm,mathtools,mathrsfs}
\usepackage[a4paper,margin=28mm,headheight=14pt]{geometry}
\usepackage{enumitem}
\usepackage[
  colorlinks=true,
  linkcolor=red,
  citecolor=blue,
  urlcolor=blue
]{hyperref}

\allowdisplaybreaks[2]
\numberwithin{equation}{section}
\setlist[enumerate]{label=\textup{(\roman*)},leftmargin=*,itemsep=3pt}

\newtheorem{theorem}{Theorem}[section]
\newtheorem{proposition}[theorem]{Proposition}
\newtheorem{lemma}[theorem]{Lemma}

\theoremstyle{definition}
\newtheorem{definition}[theorem]{Definition}
\theoremstyle{remark}
\newtheorem{remark}[theorem]{Remark}

\newcommand{\C}{\mathbb C}
\newcommand{\R}{\mathbb R}
\newcommand{\Q}{\mathbb Q}
\newcommand{\Z}{\mathbb Z}
\newcommand{\HH}{\mathbb H}

\newcommand{\Tg}{\mathcal T_g}
\newcommand{\Mg}{\mathcal M_g}

\newcommand{\NL}{\operatorname{NL}}
\newcommand{\SL}{\operatorname{SL}}
\newcommand{\PSL}{\operatorname{PSL}}

\newcommand{\tr}{\operatorname{tr}}
\newcommand{\Div}{\operatorname{div}}
\newcommand{\orb}{\mathrm{orb}}
\newcommand{\Aut}{\operatorname{Aut}}
\newcommand{\Hom}{\operatorname{Hom}}

\newcommand{\cV}{\mathbb V}
\newcommand{\cH}{\mathcal H}

\newcommand{\End}{\mathrm{End}}
\newcommand{\ad}{\operatorname{ad}}
\newcommand{\id}{\operatorname{id}}
\title[On the algebraicity of non-abelian NL loci of rank-two real local systems]{On the algebraicity of non-abelian Noether--Lefschetz loci of rank-two real local systems}
\author[Tianzhi Hu]{Tianzhi Hu}
\address{
School of Mathematics and Statistics, Wuhan University, Wuhan, Hubei 430072, P.R. China
}
\email{hutianzhi@whu.edu.cn}

\author[Kang Zuo]{Kang Zuo}
\address{
School of Mathematics and Statistics, Wuhan University, Wuhan, Hubei 430072, P.R. China;
Shanghai Institute for Mathematics and Interdisciplinary Sciences (SIMIS), Shanghai, P.R. China;
Institut f\"ur Mathematik, Johannes Gutenberg-Universit\"at Mainz, Mainz 55099, Germany
}
\email{zuok@uni-mainz.de}

\date{}
\subjclass[2020]{Primary 14D07; Secondary 14H15, 30F35, 32G15, 57M50}

\begin{document}
\begin{abstract}
As a non-abelian analogue of the Hodge locus, Simpson introduced the non-abelian Noether--Lefschetz locus and conjectured its algebraicity for $\mathbb Z$PVHS. In this paper, we study this question on the moduli space of curves \(\mathcal M_g\). For a non-unitary representation
\[
\rho:\pi_1(\Sigma_g)\longrightarrow \mathrm{SL}_2(\mathbb R)
\]
which admits a $\mathbb R$PVHS of weight one, we prove that algebraicity of a positive-dimensional non-abelian Noether--Lefschetz component is equivalent to discreteness of $\operatorname{im}\rho$, and in this case the component is precisely a marked fixed-target orbifold Hurwitz component. As applications, we construct two explicit rational families by slit surgery: one with non-discrete monodromy and non-algebraic Noether--Lefschetz image, and another with discrete monodromy whose period map is nevertheless non-uniformizing. The latter gives an affirmative answer to a question of Baldi--Lam concerning $\mathbb Q$PVHS.
\end{abstract}

\maketitle

\section{Introduction}\label{sec:intro}

Let \(\Sigma_g\) be a closed oriented smooth surface of genus \(g\ge2\), and
let \(\Tg\) be its Teichm\"uller space. We write
\[
 \pi:\mathcal C_g\longrightarrow\Tg
\]
for the universal marked curve and
\[
 p:\Tg\longrightarrow\Mg
\]
for the map forgetting the marking. Thus a point
\(t:=[X,\phi]\in\Tg\) consists of a compact Riemann surface \(X\) together
with a marking \(\phi:\Sigma_g\to X\).

Fix a representation
\[
 \rho:\pi_1(\Sigma_g)\longrightarrow\SL_2(\R).
\]
For a marked curve \(t=[X,\phi]\in\Tg\), let
\(\cV_{\rho,X}\) denote the rank-two real local system on \(X\)
induced by \(\rho\) via the marking, and set
\[
 \cV_{\rho,X,\C}:=\cV_{\rho,X}\otimes_{\R}\C.
\]
Via the markings, \(\rho\) determines an isomonodromic family of flat
bundles on the fibers of \(\pi\), hence a de Rham section
\[
 \sigma_{\mathrm{dR}}:\Tg\longrightarrow
 M_{\mathrm{dR}}(\mathcal C_g/\Tg).
\]
Fiberwise non-abelian Hodge theory, originating in the work of
Uhlenbeck--Yau, Hitchin, and Simpson
\cite{UY86,Hitchin87,Simpson92}, gives a relative real-analytic
correspondence \cite{CTW,HSYZ}
\[
 \mathrm{NHC}:M_{\mathrm{dR}}(\mathcal C_g/\Tg)
 \longrightarrow M_{\mathrm{Dol}}(\mathcal C_g/\Tg),
\]
and therefore a real-analytic Dolbeault section
\[
 \sigma_{\mathrm{Dol}}
 :=\mathrm{NHC}\circ\sigma_{\mathrm{dR}}.
\]

\medskip

Motivated by the classical Noether--Lefschetz locus and the theory of Hodge
loci, Simpson introduced the \textbf{non-abelian Noether--Lefschetz locus}
\cite{Simpson97}. For the representation \(\rho\), let
\begin{equation}\label{eq:NLC}
 \NL_{\C}(\rho)
 :=
 \left\{
 [X,\phi]\in\Tg:
 \cV_{\rho,X,\C}\text{ underlies a complex PVHS}
 \right\}.
\end{equation}
Simpson proved that this locus is a complex-analytic subvariety
\cite[Theorem~12.1]{Simpson97}. As a non-abelian analogue of the
Cattani--Deligne--Kaplan algebraicity theorem for Hodge loci
\cite{CDK}, he conjectured that, for integral representations over a
quasi-projective base, the corresponding non-abelian
Noether--Lefschetz locus is algebraic; see
\cite[Conjecture~12.3]{Simpson97}. Recently, Engel and Tayou proved this algebraicity conjecture for local systems with $\mathbb Q$-anisotropic monodromy; see \cite{EngelTayou26}.

\medskip

For a representation \(\rho:\pi_1(\Sigma_g)\to\SL_2(\R)\), we consider the weight-one real locus
\begin{equation}\label{eq:NLR}
 \NL_{\R}(\rho):=
 \left\{
 [X,\phi]\in\Tg:
 \cV_{\rho,X}\text{ underlies a real PVHS of weight one}
 \right\}.
\end{equation}
We remark that in the real rank-two setting, the two loci above coincide:
\[
\NL_{\C}(\rho)=\NL_{\R}(\rho).
\]

The natural algebraicity question in the universal-curve setting is whether a positive-dimensional component of the non-abelian Noether-Lefschetz locus projects to an algebraic subvariety of \(\Mg\). We prove the following real rank-two theorem.

\begin{theorem}\label{thm:main}
Let \(g\ge2\), and let
\[
 \rho:\pi_1(\Sigma_g)\longrightarrow\SL_2(\R)
\]
be non-unitary and suppose that \(\rho\) admits a weight-one $\mathbb R$PVHS for some marked complex structure. Let \(U\) be
a positive-dimensional connected component of \(\NL_{\R}(\rho)\). The
following are equivalent:
\begin{enumerate}
\item the image \(p(U)\subset\Mg\) is a closed algebraic subvariety;
\item the image \(\rho\bigl(\pi_1(\Sigma_g)\bigr)\subset\SL_2(\R)\) is discrete;
\item \(U\) is a marked fixed-target orbifold Hurwitz component in the
sense of Definition~\ref{def:fixed-target-hurwitz}.
\end{enumerate}
\end{theorem}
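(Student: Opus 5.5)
We begin by making the geometry of a weight-one $\R$PVHS explicit, since all three conditions will be compared through it. A weight-one real VHS structure on $\cV_{\rho,X}$ with $\rho$ non-unitary is a Higgs bundle $L\oplus L^{-1}$ with nonzero Higgs field $\theta:L\to L^{-1}\otimes K_X$. Equivalently, it is a $\rho$-equivariant holomorphic map $f:\widetilde X\to\HH$, the period map, which is nonconstant. The Higgs field is $df$, so $f$ is a branched holomorphic map. Consequently $[X,\phi]\in\NL_{\R}(\rho)$ if and only if some $\rho$-equivariant holomorphic map $\widetilde X\to\HH$ exists (or one to the lower half-plane; we fix orientation by the sign of the Euler class). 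Under this dictionary $\NL_{\R}(\rho)$ becomes the locus of marked curves admitting a holomorphic branched developing map into $\HH$ with holonomy $\rho$. Pulling back the hyperbolic metric, it is the locus of branched hyperbolic cone structures with integral cone angles $2\pi(1+k_i)$ and holonomy $\rho$. The Milnor--Wood/Toledo invariant fixes $\sum k_i=2g-2-|e(\rho)|$. The local deformation theory of such branched structures with fixed holonomy tells us that a component $U$ is parametrized by moving the branch points, with collisions stratified. The first step is therefore to show $\dim U$ equals the number of distinct branch points, up to constraints, and that $U$ is the set of such structures.

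\emph{(ii)$\Rightarrow$(iii).} Suppose $\Gamma=\rho(\pi_1)$ is discrete. It is then a Fuchsian group, possibly with torsion, and it is finitely generated and non-elementary because $\rho$ is non-unitary and admits a nonconstant equivariant map. The orbifold $\mathcal O=\HH/\Gamma$ has finite area, since the equivariant map descends to a surjective proper map. Indeed, $f$ descends to a holomorphic map $X\to\mathcal O$ that is compact-image and nonconstant, hence a finite branched orbifold covering of a fixed target inducing $\rho$ on $\pi_1$ after marking. This is exactly the marked fixed-target orbifold Hurwitz condition of Definition~\ref{def:fixed-target-hurwitz}. Conversely, every such cover gives a point of $\NL_{\R}(\rho)$. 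Connectedness then identifies $U$ with a whole Hurwitz component.

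\emph{(iii)$\Rightarrow$(i).} Hurwitz spaces of covers of a fixed orbifold curve with fixed ramification data are quasi-projective, being finite over configuration spaces of branch values on the fixed target. Their images in $\Mg$ are algebraic by Riemann existence. Closedness follows because $U$ is a whole connected component of the closed analytic set $\NL_{\R}(\rho)$. The projection of $U$ is closed in $\Mg$ since degenerations of covers, where branch points collide, stay within the finitely many Hurwitz strata of the same monodromy. The union is then a closed algebraic subvariety.

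\emph{(i)$\Rightarrow$(ii).} This is the main obstacle. Assume $\Gamma$ is non-discrete; it is then dense in $\SL_2(\R)$, since a non-elementary non-discrete subgroup is dense. We aim to show that $p(U)$ is not algebraic. The plan is to exhibit an accumulation or non-properness phenomenon. Moving a branch point along a closed loop in the surface, or performing slit surgery along a segment whose endpoints differ by an element of $\Gamma$, changes the marking by a mapping-class element while keeping $\rho$. Density of $\Gamma$ allows one to find in $U$ infinitely many distinct marked points with the same image in $\Mg$, accumulating inside $\Tg$ along a leaf. Concretely, we first try to show that $p|_U$ has infinite fibres, or that $p(U)$ has infinitely many local analytic branches through a point, via the isoperiodic-type foliation argument: the stabilizer of $U$ in the mapping class group is non-finite-index-like while $U$ is transverse. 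An algebraic subvariety of $\Mg$ has finitely many local branches, which yields the contradiction. If this is hard in general, an alternative is to reduce via a cone-angle degeneration in $U$ to a boundary stratum where a branched structure pinches. Non-discreteness then forces the closure of $p(U)$ to be real-analytic but not complex-algebraic, detected by the non-algebraic transcendence of the period map near the boundary.
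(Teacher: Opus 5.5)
The genuine gap is (i)\(\Rightarrow\)(ii). There you list strategies but carry none out, and the first cannot work as stated, because infinite fibres of \(p|_U\) do not contradict algebraicity. In the discrete example of Proposition~\ref{prop:discrete-example}, letting \(z\) circle \(w\) in the slit family changes the marking by a power of the Dehn twist about \(b\). Since \(\rho(b)=I\), \(p|_U\) has infinite fibres there, yet \(p(U)\) is algebraic. More generally, points \(t,\gamma t\in U\) with \(\gamma U=U\) give the same branch of \(p(U)\). To get infinitely many branches at \(p(t)\), you would need \(\gamma t\in U\) for mapping classes \(\gamma\) giving infinitely many distinct classes \([\gamma\cdot\bar\rho]\); germ rigidity (Lemma~\ref{lem:local}) would then separate the germs. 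Nothing in your sketch links density of \(\Gamma\) to such \(\gamma\). Moving branch points gives no control over returning to the same point of \(\Mg\), and the ``transcendence near the boundary'' alternative is not yet an argument.

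The missing idea is to argue directly, turning algebraicity of \(p(U)\) into an algebraic family that carries \(\rho\).
\begin{enumerate}
\item Lemma~\ref{lem:lifting} gives an algebraic \(S\) of dimension \(b\) with a lift \(\widetilde S\subset U\).
\item Germ rigidity shows that the monodromy of \(S\) in \(\operatorname{MCG}_g\) fixes \([\bar\rho]\): the tangent space of \(\NL_\R\) recovers \(D\), and Troyanov's uniqueness then recovers \([\bar\rho]\). Hence \(\rho\) extends to \(R:\pi_1(\mathcal C)\to\SL_2(\R)\) (Proposition~\ref{prop:extension}).
\item Corlette--Simpson and Loray--Pereira--Touzet factor \(\bar R\) through an orbicurve \(B\); Lemma~\ref{lem:real-polydisk} reduces the polydisk case to this.
\item Orbifold Milnor--Wood gives \(b\ge r=2g-2-d\kappa\). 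Generic immersion of \(S\) into the \(r\)-dimensional fixed-target Hurwitz space gives \(b\le r\).
\item Hence \(e_B(\sigma)=-\chi_{\orb}(B)\), and Goldman's maximality theorem makes \(\bar\rho\) discrete.
\end{enumerate}

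There are two smaller points. In (ii)\(\Rightarrow\)(iii), which otherwise matches the paper, you still need the forgetful map to be a local biholomorphism onto \(U\). This comes from the dimension match \(r=b\), unobstructedness of \([T_X\to f^*T_B]\), and \(H^0(X,f^*T_B)=0\). Note also that \(U\) is smooth of dimension \(\deg D\), not stratified by collisions.

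In (iii)\(\Rightarrow\)(i), closedness of \(U\) in \(\Tg\) does not give closedness of \(p(U)\) in \(\Mg\), since \(p\) is not proper. What is needed is properness of \(\cH_{g,d}(B)\to\Mg\). The paper gets this from properness of twisted stable maps together with \(\deg T_B<0\), which excludes rational bubbles. A Schwarz--Pick normal-families argument using discreteness and cocompactness of \(\Gamma\) would also work. Your remark about colliding branch values does not address this.
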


\medskip

As applications, we use the slit construction to produce two explicit non-unitary representations $\rho:\pi_1(\Sigma_g)\to\SL_2(\Q)$ which admit weight-one $\mathbb Q$PVHS. For the first, the monodromy image is non-discrete, yielding a non-algebraic Noether--Lefschetz image; for the second, the monodromy image is discrete, yielding an algebraic Noether--Lefschetz image.

For the rational examples below, \(\NL_{\Q}(\rho)\) denotes the locus where
the induced \(\Q\)-local system underlies a weight-one $\mathbb Q$PVHS. In rank two this agrees with \(\NL_{\R}(\rho)\). Indeed, since
\(\rho\) takes values in \(\SL_2(\Q)\), the standard symplectic form on
\(\Q^2\) is \(\rho\)-invariant. Any real polarization is a nonzero real
multiple of this form, and hence, after changing the sign if necessary,
the standard rational symplectic form polarizes the same weight-one Hodge
structure.

\begin{proposition}\label{prop:non-discrete-example}
For every \(g\ge3\), there exists an absolutely irreducible, non-unitary
representation
\[
 \rho_{\mathrm{nd}}:\pi_1(\Sigma_g)\longrightarrow\SL_2(\Q)
\]
such that:
\begin{enumerate}
\item \(\NL_{\Q}(\rho_{\mathrm{nd}})\subset\Tg\) is a smooth complex surface;
\item \(\rho_{\mathrm{nd}}\) preserves no full integral lattice in
\(\Q^2\);
\item the image of \(\rho_{\mathrm{nd}}\) in \(\SL_2(\R)\) is not discrete.
\end{enumerate}
\end{proposition}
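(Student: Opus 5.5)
We construct \(\rho_{\mathrm{nd}}\) from a weight-one PVHS whose Higgs field is a single abelian differential, obtained by slit surgery.

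\textbf{Setup.} A weight-one \(\R\)PVHS on a rank-two local system over \(X\) is a Higgs bundle \(L\oplus L^{-1}\) with \(\theta:L\to L^{-1}\otimes K_X\). Its period map is the equivariant holomorphic map \(\tilde X\to\HH\) given by the developing map. We take \(L^2\cong K_X\) and \(\theta\) the identity, so the period map is a branched developing map for a translation-type structure. Concretely, let \((X_0,\omega_0)\) be a genus-two translation surface made of unit squares; its periods lie in \(\Z[i]\). The flat \(\SL_2(\R)\)-local system is the one attached to a \emph{dilation/affine} structure, built by slit surgery. Cut \(X_0\) along a horizontal slit, and glue in a handle whose two boundary slits are identified by the affine map \(z\mapsto \lambda z+b\) with \(\lambda\in\Q_{>0}\setminus\{1\}\). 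Do this \(g-2\) times, or once for the minimal case \(g=3\), keeping the number of independent parameters fixed. The holonomy of the resulting projective structure lands in the upper-triangular group, so we need to modify it. Instead, use the affine gluing maps as elements of \(\SL_2(\Q)\) acting on \(\HH\): each slit is glued by a hyperbolic element \(A_j\in\SL_2(\Q)\) with non-square-integral entries, e.g. \(A=\begin{pmatrix}2&0\\0&1/2\end{pmatrix}\) conjugated by a rational unipotent. The monodromy is then generated by an integral Fuchsian part together with such \(A_j\). This produces \(\rho_{\mathrm{nd}}\) with weight-one structure on the constructed curve.

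\textbf{Key steps, in order.}

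\begin{enumerate}
\item Show that \(\rho_{\mathrm{nd}}\) is absolutely irreducible and non-unitary. Two hyperbolic elements with distinct fixed points suffice for irreducibility, and a hyperbolic element already rules out unitarity.

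\item Prove item (ii). Suppose a lattice \(\Lambda\subset\Q^2\) were preserved. Then all traces would be integers. So choose a gluing element, or a product of two, with trace in \(\Q\setminus\Z\), e.g. trace \(5/2\) arising from \(\lambda=2\) combined with a unipotent.

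\item Prove item (iii). Non-discreteness follows once the image contains two elements generating a non-discrete group. For instance, choose two elements \(u,v\in\rho_{\mathrm{nd}}(\pi_1)\) with \(\lvert\tr[u,v]-2\rvert<1\), \([u,v]\neq 1\), and \(u\) elliptic of infinite order, such as trace \(1/2\). Infinite order is guaranteed because \(1/2\neq 2\cos(2\pi r)\) for rational \(r\), by Niven. The cyclic group \(\langle u\rangle\) is then already non-discrete. So the plan is to arrange, via the surgery, a loop whose monodromy has trace \(1/2\). The point is that the surgery parameters give enough freedom to realize rational traces in \((-2,2)\setminus\{0,\pm1\}\).

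\item Prove item (i). The locus \(\NL_{\Q}\) is the set of marked curves admitting a branched equivariant map to \(\HH\) with holonomy \(\rho\) and with the prescribed Higgs-field zero pattern. By the deformation theory of such structures (Simpson's analytic description, i.e.\ the relative-period/branch-point count), the locus is locally parametrized by moving the branch points of the period map subject to holonomy being fixed. We compute the tangent space as the space of infinitesimal moves of the zeros of \(\theta\) preserving the isomonodromic condition. The target is a transversality count showing it is \(2\)-dimensional at every point and that the locus is smooth.
\end{enumerate}

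\textbf{Main obstacle.} The hardest step is (i): showing the component is exactly a smooth surface (global, not just at the constructed point) and identifying all components. I would first compute the first-order deformations via the Higgs complex \(\mathrm{End}(E)\xrightarrow{\ad\theta}\mathrm{End}(E)\otimes K\), restricted to isomonodromic directions, and count that the zeros of \(\theta\) contribute exactly two free parameters. I would then use the Simpson analytic-subvariety structure to upgrade this to smoothness.
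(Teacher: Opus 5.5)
Your proposal does not yet give a working construction, and the gaps are structural.

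\textbf{The construction.} First, the Higgs data you fix, $L^2\cong K_X$ with $\theta$ the identity, is the uniformizing Higgs bundle. In that case $\theta$ has no zeros, the period map is the unbranched uniformization of $X$, and the monodromy is the discrete Fuchsian group of $X$. The locus is then zero-dimensional, since its dimension is $b=\deg\Div(\theta)=2g-2-e(\bar\rho)=0$. This contradicts both (i) and (iii); what you need is a Higgs field vanishing on a divisor $D$ of degree exactly $2$, i.e.\ $e(\bar\rho)=2g-4$.

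Second, a square-tiled translation or dilation surface has a developing map to $\C$ with affine holonomy, not a $\bar\rho$-equivariant holomorphic map to $\HH$. Its gluings therefore cannot simply be reinterpreted as elements of $\SL_2(\Q)$ acting on $\HH$. The ``integral Fuchsian part'' you then invoke is never constructed, and it cannot exist for a closed surface: a closed surface group preserving a lattice in $\Q^2$ would be a torsion-free subgroup of $\SL_2(\Z)$, hence free. The missing input is a torsion-free cocompact Fuchsian group $\Gamma_0\subset\SL_2(\Q)$ with $\Gamma_0\backslash\HH$ of genus $g-1$ (Takeuchi), which is necessarily non-integral.

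Third, the handle count is wrong. Slit surgery is cut-and-reglue, so it preserves the hyperbolic area $2\pi e(\bar\rho)$ while raising the genus by one. By $\deg D=2g-2-e(\bar\rho)$, each inserted handle therefore adds two branch points and two dimensions. From a genus-$2$ base, $g-2$ insertions give a locus of dimension $2g-4$, which is $2$ only for $g=3$, so the parameter count cannot be ``kept fixed''. You must start from genus $g-1$ and insert exactly one handle, with transition $A$ on one collar and the identity on the other.

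\textbf{Item (iii).} Non-discreteness is not established. Non-integrality does not force non-discreteness: $\Gamma_0$ itself, and the paper's second example, are rational, non-integral and discrete. So hyperbolic gluing elements such as $\operatorname{diag}(2,1/2)$ give (ii) but do not by themselves give (iii). Nor can a trace-$1/2$ loop come from the surgery parameters, because moving the branch points is an isomonodromic deformation and changes no trace.

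The only freedom is the gluing element, so take it elliptic of infinite order from the outset. Trace $1/2$ works by your Niven argument; the paper uses $A=\frac15\begin{pmatrix}3&-4\\4&3\end{pmatrix}$, of trace $6/5$. The surgery only requires $\pi_Y\neq\pi_Y\circ A$, which is automatic because the discrete group $\Gamma_0$ has no infinite-order elliptic elements. This single element then yields (ii) and (iii) at once, while $\Gamma_0\subset\operatorname{im}\rho_{\mathrm{nd}}$ yields absolute irreducibility and non-unitarity.

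\textbf{Item (i).} This is not the obstacle you expect. By Lemma~\ref{lem:local}, $\NL_{\R}(\rho)$ is, wherever nonempty, a closed complex submanifold of dimension $2g-2-e(\bar\rho)$, a number depending only on $\rho$. So it suffices to compute $\mathcal N\simeq T_X(D)$ with $\deg D=2$ at the one constructed point. This gives $e(\bar\rho_{\mathrm{nd}})=2g-4$, hence the whole locus is a smooth surface, and $\NL_{\Q}=\NL_{\R}$ in rank two.
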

\begin{remark}\label{rem:Baldi-Lam}
By Theorem~\ref{thm:main}, for every connected component
\(U\subset\NL_{\Q}(\rho_{\mathrm{nd}})\), its image $p(U)\subset\Mg$ is not a closed algebraic subvariety. Our constructions are closely related to the recent work of Baldi and Lam
\cite{BaldiLam26} on non-integral \(\Q\)VHS.

\begin{enumerate}
\item
Baldi and Lam construct non-integral \(\Q\)VHS using explicit
Fenchel--Nielsen-type parametrizations of Teichm\"uller components of
character varieties, together with arithmetic control of trace rings.
Proposition~\ref{prop:non-discrete-example} gives a different explicit
source of non-integral \(\Q\)PVHS, obtained from a rational Fuchsian group
by slit surgery and the associated branched hyperbolic developing map.

\item
Baldi and Lam show that the Cattani--Deligne--Kaplan algebraicity theorem does not extend to non-integral \(\Q\)VHS.
Proposition~\ref{prop:non-discrete-example}, together with
Theorem~\ref{thm:main}, gives the corresponding non-abelian phenomenon:
once the integrality hypothesis is removed, a positive-dimensional
non-abelian Noether--Lefschetz component need not have algebraic image in
\(\Mg\), in contrast with Simpson's algebraicity conjecture.
\end{enumerate}
\end{remark}

The complementary construction has discrete monodromy.  

\begin{proposition}\label{prop:discrete-example}
For every \(g\ge4\), there exists an absolutely irreducible, non-unitary
representation
\[
 \rho_{\mathrm{disc}}:\pi_1(\Sigma_g)\longrightarrow\SL_2(\Q)
\]
such that:
\begin{enumerate}
\item \(\NL_{\Q}(\rho_{\mathrm{disc}})\subset\Tg\) is a smooth complex surface;
\item \(\rho_{\mathrm{disc}}\) preserves no full integral lattice in
\(\Q^2\);
\item the image of \(\rho_{\mathrm{disc}}\) in \(\SL_2(\R)\) is discrete and projects to a cocompact Fuchsian group.
\end{enumerate}
Moreover, the slit construction shows that
\(\rho_{\mathrm{disc}}\) admits a rational weight-one PVHS with
discrete monodromy whose period map is not uniformizing.
\end{proposition}
\begin{remark}\label{rem:discrete-example}
By Theorem~\ref{thm:main}, every connected component
\(U\subset\NL_{\Q}(\rho_{\mathrm{disc}})\) is a marked fixed-target
orbifold Hurwitz component, and \(p(U)\subset\Mg\) is a closed algebraic
surface.  Since the resulting \(\Q\)PVHS has discrete monodromy but is not
uniformizing, Proposition~\ref{prop:discrete-example} also gives an
affirmative answer to Baldi--Lam \cite[Question~7.5]{BaldiLam26}.
\end{remark}

The proof is organized as follows. Section~\ref{sec:local} studies the local geometry of the fixed-monodromy locus. Using the tangent-space formula together with the rank-two period map, we prove the smoothness and dimension statement and establish the germ-rigidity result in Lemma~\ref{lem:local}. Section~\ref{sec:extension} passes from the local analytic locus to an algebraic family: Proposition~\ref{prop:extension} provides the required algebraic lifting and extends the fiber representation to the total space. Section~\ref{sec:main-proof} proves the three equivalences. Its main input in the algebraic-to-discrete direction is the Corlette--Simpson--Loray--Pereira--Touzet factorization theorem, Theorem~\ref{thm:CS-LPT-alternative}, which reduces the argument to an orbicurve factorization; the remaining steps combine the Milnor--Wood inequality with a Hurwitz-space dimension comparison. Conversely, discreteness identifies \(U\) with a fixed-target orbifold Hurwitz component whose forgetful morphism to \(\Mg\) is finite. Section~\ref{sec:examples} concludes with two explicit slit constructions realizing, respectively, the non-discrete and discrete cases.

\section{Local geometry of the non-abelian Noether--Lefschetz locus}\label{sec:local}
Under the assumptions of Theorem~\ref{thm:main}, write
\[
 \bar\rho:\pi_1(\Sigma_g)\longrightarrow\PSL_2(\R)
\]
for the projectivization. Assume that $[X,\phi]\in\NL_{\R}(\rho)$. A real PVHS of rank
two and weight one is equivalently described by a
$\bar\rho$-equivariant holomorphic period map
\begin{equation*}
 F:\widetilde X\longrightarrow\HH.
\end{equation*}
The period map is nonconstant, since otherwise $\bar\rho$ would fix a
point of $\HH$ and $\rho$ would be unitary.

Let $L\subset \cV_{\rho,X,\C}$ be the Hodge line.  Since
$\det\cV_{\rho,X,\C}\simeq\mathcal O_X$, the associated graded Higgs
bundle is
\begin{equation*}
 \mathcal E:=L\oplus L^{-1},
 \qquad
 \theta:\mathcal E\longrightarrow\mathcal E\otimes K_X,
\end{equation*}
where, with respect to this decomposition,
\begin{equation}\label{eq:higgs-matrix}
 \theta=
 \begin{pmatrix}
 0&0\\
 \beta&0
 \end{pmatrix},
 \qquad
 \beta\in H^0(X,K_XL^{-2}).
\end{equation}

 Set $\mathcal N:=\Hom(L,L^{-1})\simeq L^{-2}.$ Thus $\mathcal N$ is the descent of $F^*T_{\HH}$, and the Higgs field
$\beta$ is equivalently the differential of the period map,
\[
 dF:T_X\longrightarrow\mathcal N.
\]
Put
\begin{equation}\label{eq:euler}
 e:=e(\bar\rho):=-\deg\mathcal N=2\deg L>0.
\end{equation}
The number $e$ is the signed Euler number of the oriented projective
representation.  Let
\begin{equation*}
 D:=\Div(dF)=\Div(\beta),
 \qquad
 b:=\deg D=2g-2-e.
\end{equation*}
Then
\begin{equation}\label{eq:normal-sequence}
 0\longrightarrow T_X\xrightarrow{\,dF\,}\mathcal N
 \longrightarrow\mathcal N|_D\longrightarrow0.
\end{equation}
In particular $b\ge0$; this follows directly from the existence of the
nonzero morphism $T_X\to\mathcal N$.

The image of \(\bar\rho\) in \(\PSL_2(\R)\) is nonelementary.  Indeed, an elementary subgroup
of $\PSL_2(\R)$ has Euler number zero: after passing to a subgroup of
index at most two, it fixes either a point of $\HH$ or a point of
$\partial\HH$, and the associated flat circle bundle admits a section.
Since $e>0$, this is impossible.  In particular, the image of \(\bar\rho\) in \(\PSL_2(\C)\) is
Zariski dense.

The local geometry of the non-abelian Noether--Lefschetz locus is
controlled by the non-abelian Kodaira--Spencer map introduced by \cite{FuSheng,HSZ}.  We recall the
deformation-theoretic description in the form needed below.  Let
\(t=[X,\phi]\in\NL_{\C}(\rho)\), and assume that the associated graded
Higgs bundle \((\mathcal E,\theta)\) is stable.  The Higgs field induces a
morphism of deformation complexes
\[
 (T_X,0)\longrightarrow
  \mathcal K^\bullet_{\theta}
 :=\left(
 \End(\mathcal E)\xrightarrow{\ad(\theta)}
 \End(\mathcal E)\otimes K_X
 \right),
\]
and hence a map
\begin{equation}\label{eq:non-abelian-KS-intro}
 \theta_*:
 H^1(X,T_X)\longrightarrow
 \mathbb H^1\!\left(
 X,\,
 \End(\mathcal E)\xrightarrow{\ad(\theta)}
 \End(\mathcal E)\otimes K_X
 \right).
\end{equation}
Hu--Sun--Yang--Zuo identify the Zariski tangent space of the
non-abelian Noether--Lefschetz locus with the kernel of this
non-abelian Kodaira--Spencer map \cite{HSYZ}.  Since the
Kodaira--Spencer map of the universal marked curve is the identity,
their formula gives
\begin{equation}\label{eq:HSYZ-tangent-intro}
 T_t^{\mathrm{Zar}}\NL_{\C}(\rho)=\ker\theta_* .
\end{equation}
This will be used to identify the tangent space of the non-abelian Noether--Lefschetz locus.

\medskip

For $\mathcal E=L\oplus L^{-1}$, we have $\End(\mathcal E)
 =\mathcal O_X \oplus \mathcal N^{-1}\oplus
 \mathcal N \oplus \mathcal O_X.$ Contraction with the Higgs field defines a morphism of complexes
\begin{equation}\label{eq:theta-morphism}
 (T_X\longrightarrow0)
 \longrightarrow
 \mathcal K^\bullet_{\theta},
 \qquad
 v\longmapsto\iota_v\theta
 =
 \begin{pmatrix}
 0&0\\
 dF(v)&0
 \end{pmatrix}.
\end{equation}

\begin{lemma}[Local structure and germ rigidity]\label{lem:local}
The reduced fixed-representation locus $\NL_{\R}(\rho)$ is, wherever
nonempty, a closed complex submanifold of $\Tg$ of dimension
$b$.  At $[X,\phi]$ its tangent space is given by the following equivalent
description:
\begin{equation}\label{eq:tangent-NL}
 T_{[X,\phi]}\NL_{\R}(\rho)
 =
 \ker\!\left(
 H^1(X,T_X)
 \xrightarrow{\,H^1(dF)\,}
 H^1(X,\mathcal N)
 \right).
\end{equation}
Moreover, among absolutely irreducible non-unitary representations with
the same signed Euler number, the germ of $\NL_{\R}$ at
$[X,\phi]$ determines the projective representation up to
$\PSL_2(\R)$-conjugacy.
\end{lemma}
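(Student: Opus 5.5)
The plan has three stages: compute the tangent space from the HSYZ formula \eqref{eq:HSYZ-tangent-intro}, prove smoothness and the dimension count with an explicit local parametrization by period maps, and read off germ rigidity from the branch divisor.

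\textbf{Tangent space.} Since $\bar\rho$ is nonelementary, $(\mathcal E,\theta)$ is stable, so the HSYZ formula applies. I would compute $\mathbb H^1(\mathcal K^\bullet_\theta)$ using the grading. The complex $\mathcal K^\bullet_\theta$ splits according to the $\C^*$-weights of $\End(\mathcal E)$. In the relevant weight, the image of $\iota_v\theta$ lies in the lowest-weight summand $\mathcal N\subset\End(\mathcal E)$. On that summand $\ad(\theta)$ maps $\mathcal N$ to zero inside $\End(\mathcal E)\otimes K_X$: the commutator of two strictly lower-triangular matrices vanishes. So the weight piece of $\mathcal K^\bullet_\theta$ receiving $T_X$ is $\mathcal N\to 0$, possibly coupled to the next weight piece $\mathcal O_X\oplus\mathcal O_X\to\mathcal N\otimes K_X$.

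I would check that the induced map $H^1(\mathcal N)\to\mathbb H^1(\mathcal K^\bullet_\theta)$ is injective on the image of $H^1(dF)$. This is a spectral sequence argument: the preceding differential comes from $\ad(\theta)$ applied to the weight-zero piece, and it lands in $\mathcal N\otimes K_X$, not in $\mathcal N$. This gives \eqref{eq:tangent-NL}. From \eqref{eq:normal-sequence} we have
\[
H^0(\mathcal N)=0\to H^0(\mathcal N|_D)\to H^1(T_X)\to H^1(\mathcal N),
\]
since $\deg\mathcal N<0$. Hence the kernel in \eqref{eq:tangent-NL} has dimension $h^0(\mathcal N|_D)=b$.

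\textbf{Smoothness.} I would show that $\NL_\R(\rho)$ has dimension at least $b$ near $[X,\phi]$ by writing down a $b$-parameter family. Locally, $F$ is a branched map to $\HH$ with branch divisor $D$. Deforming the critical values, or more precisely performing branched-structure deformations that move the critical points while keeping the holonomy $\bar\rho$ fixed, yields new complex structures $X_s$ pulled back from $\HH$ via equivariant branched maps. The deformation space of branched $(\PSL_2(\R),\HH)$-structures with fixed holonomy and total branching $b$ is locally parametrized by moving the zeros of $dF$. A Hurwitz-type count gives $b$ complex parameters, and these map to $\NL_\R(\rho)$. The differential of this family is exactly the connecting map $H^0(\mathcal N|_D)\to H^1(T_X)$, which is injective. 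So the reduced locus contains a $b$-dimensional manifold whose tangent space equals the Zariski tangent space. Therefore the locus is smooth of dimension $b$ there. Closedness holds because $\NL$ is an analytic subvariety by \cite[Theorem~12.1]{Simpson97}. The weight-one lift is automatic, since $\rho$ lifts $\bar\rho$ and the Hodge filtration is determined by $F$.

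\textbf{Germ rigidity (main obstacle).} Suppose $\rho'$ is another absolutely irreducible non-unitary representation with the same Euler number and the same germ. Then $b'=b$, and the tangent spaces coincide. The tangent space is the image of $H^0(\mathcal N|_D)$, and the $b$-plane $\delta(H^0(\mathcal N|_D))\subset H^1(T_X)$ determines $D$. This is the step I would try first, by Serre duality: it is the annihilator in $H^0(K_X^2)$ of quadratic differentials vanishing on $D$, and it determines $D$ at least when $b<\deg K^2$. This gives $D'=D$, so $\mathcal N'\simeq\mathcal N$ via $T_X(D)$, and $dF'$ is proportional to $dF$ as sections of $\mathcal N$.

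Then both $F$ and $F'$ are equivariant branched local isometries for the singular hyperbolic metrics $F^*g_\HH$ and $F'^*g_\HH$ on $X$, with cone points at $D$. Each is the unique solution of the Gauss curvature equation with prescribed cone divisor (the Higgs bundle metric is uniquely determined by $(\mathcal E,\theta)$). Since $\mathcal E'\simeq\mathcal E$, $\theta'=\lambda\theta$, and the Higgs bundles are stable with no nontrivial $\C^*$-scaling ambiguity after the isomorphism, non-abelian Hodge theory gives $\rho'\simeq\rho$ up to $\PSL_2(\R)$-conjugation. I expect the recovery of $D$ from the tangent plane to be the delicate point. If it fails generically, I would instead use that $D$ varies along the germ: the first-order variation over the whole $b$-dimensional germ recovers $D$ at nearby points as well.
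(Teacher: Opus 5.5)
Your architecture matches the paper's. The tangent space comes from HSYZ plus the rank-two splitting of $\End(\mathcal E)$. Smoothness comes from the $b$-dimensional fixed-holonomy deformation space, with $H^0(X,\mathcal N)=0$ making it immerse into $\Tg$. Rigidity comes from recovering $D$ by Serre duality and then using uniqueness of the branched hyperbolic metric.

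The genuine gap is at the step you yourself flag as delicate. You never prove that the annihilator $H^0(X,K_X^2(-D))\subset H^0(X,K_X^2)$ of the tangent plane has common zero divisor exactly $D$.
\begin{itemize}
\item The criterion you propose, $b<\deg K_X^2$, is automatic here and does not suffice. It only makes $\deg K_X^2(-D)$ positive, and a line bundle of small positive degree can have base points. In that case the tangent plane determines only $D$ plus a base divisor, not $D$.
\item Your fallback (``$D$ varies along the germ'') is not an argument.
\end{itemize}
The missing idea is global generation. Via $dF$ one has $K_X^2(-D)\simeq K_X\mathcal N^{-1}$, of degree $2g-2+e$. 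Since $e=2\deg L$ is even and positive, $e\ge 2$, so the degree is at least $2g$ and the bundle is base-point free. Equivalently, a base point $x$ would give $H^0(X,\mathcal N(x))\neq 0$, which is impossible because $\deg\mathcal N(x)=1-e<0$. Hence the common zero divisor is exactly $D$, and equal germs force $D'=D$.

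There are also two smaller inaccuracies in your final step.
\begin{itemize}
\item Knowing $D$ only fixes $L^2\simeq K_X(-D)$. So $(\mathcal E',\theta')\simeq(\mathcal E,\theta)\otimes\epsilon$ for a $2$-torsion line bundle $\epsilon$, and you get $\rho'\simeq\rho$ twisted by the sign character of $\epsilon$, not $\rho'\simeq\rho$. This is harmless projectively but should be stated.
\item Non-abelian Hodge theory only yields conjugacy in $\PSL_2(\C)$. To land in $\PSL_2(\R)$ you must add two facts: Zariski density makes the conjugator real up to $\operatorname{PGL}_2(\R)$, and equal signed Euler numbers rule out an orientation-reversing conjugator.
\end{itemize}
The paper sidesteps both issues by applying Troyanov's uniqueness directly to the developing maps. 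One gets $F^*g_{\HH}=F'^*g_{\HH}$, so $F'=g\circ F$ for an orientation-preserving isometry $g$, which conjugates $\bar\rho$ to $\bar\rho'$ inside $\PSL_2(\R)$.

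Two further remarks on the earlier steps:
\begin{itemize}
\item Your tangent-space argument needs no spectral sequence and involves no ``coupling''. With the correct weight grading, $\mathcal K^\bullet_\theta$ is a direct sum of subcomplexes $[\End^k\to\End^{k-1}\otimes K_X]$, one of which is $[\mathcal N\to 0]$. Hence $H^1(X,\mathcal N)\to\mathbb H^1(\mathcal K^\bullet_\theta)$ is split injective.
\item Citing Simpson's analyticity theorem for closedness is an acceptable substitute for the paper's normal-families argument.
\end{itemize}
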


\begin{proof}
By \cite[Lemma 6.2]{HSZ}, the non-abelian Kodaira--Spencer map $\theta_*$ factors
through the natural injection of $H^1(X,\ker\ad(\theta))$ into the
corresponding hypercohomology group. Notice that by \eqref{eq:higgs-matrix} we have
\begin{equation}\label{eq:centralizer-theta}
 \ker\!\left(
 \ad(\theta):\End(\mathcal E)\to\End(\mathcal E)\otimes K_X
 \right)
 =
 \mathcal O_X\!\cdot\!\id_{\mathcal E}\oplus\mathcal N.
\end{equation}
 In the present rank-two setting,
\eqref{eq:theta-morphism} and \eqref{eq:centralizer-theta} identify the
induced map $\theta_*$ with
\[
H^1(X,T_X)\xrightarrow{\,H^1(dF)\,}H^1(X,\mathcal N)
 \hookrightarrow H^1(X,\ker\ad(\theta)),\]
which together with \eqref{eq:HSYZ-tangent-intro} gives \eqref{eq:tangent-NL}.

\medskip

To prove smoothness, consider separately the fixed-holonomy deformation
complex of the equivariant period map,
\begin{equation*}
 \mathcal C_F^\bullet
 :=
 \left[
 T_X\xrightarrow{\,dF\,}\mathcal N
 \right],
\end{equation*}
with $T_X$ in degree zero.  By \eqref{eq:normal-sequence}, we have $\mathcal C_F^\bullet\simeq\mathcal N|_D[-1].$ Hence
\[
 \mathbb H^2(X,\mathcal C_F^\bullet)=0,
 \qquad
 \dim\mathbb H^1(X,\mathcal C_F^\bullet)
 =
 h^0(X,\mathcal N|_D)
 =
 \deg D=b.
\]
Thus fixed-holonomy deformations are unobstructed of dimension $b$.
Since $\deg\mathcal N=-e<0$, one has $H^0(X,\mathcal N)=0$, so
forgetting the developing map is locally immersive.

On the other hand, the long exact sequence associated with
\eqref{eq:normal-sequence} gives
\[
 \dim\ker\!\left(
 H^1(X,T_X)\xrightarrow{\,H^1(dF)\,}H^1(X,\mathcal N)
 \right)
 =
 h^0(X,\mathcal N|_D)=b.
\]
Thus $\NL_{\R}(\rho)$ has Zariski tangent dimension $b$ and
contains the smooth $b$-dimensional germ supplied by the
fixed-holonomy deformation space.  It follows that the
Noether--Lefschetz germ is itself smooth of dimension $b$ and coincides
with that germ.

\medskip

We prove closedness.  Suppose
\(t_j=[X_j,\phi_j]\to t=[X,\phi]\) in \(\Tg\), and each \(X_j\)
admits a \(\bar\rho\)-equivariant holomorphic map
$F_j:\widetilde X_j\to\HH$.  Fix a compact fundamental region and a
finite generating set of $\pi_1(\Sigma_g)$.  Schwarz--Pick gives uniform
local bounds once the value at one point is controlled.  If the base
values escaped every compact subset of $\HH$, a subsequence would
converge to a point of $\partial\HH$ fixed by every generator,
contradicting nonelementarity.  A normal-family argument therefore produces a $\bar\rho$-equivariant
holomorphic limit $F:\widetilde X\to\HH$.  It is nonconstant, since a
constant equivariant map would give a common fixed point in $\HH$.
Hence $\NL_{\R}(\rho)$ is closed.

\medskip

It remains to prove germ rigidity.  By Serre duality, the annihilator of
\eqref{eq:tangent-NL} in $H^0(X,K_X^2)$ is
\[
 \operatorname{im}\!\left(
 H^0(X,K_X\mathcal N^{-1})
 \xrightarrow{\,dF\,}
 H^0(X,K_X^2)
 \right)
 =
 H^0(X,K_X^2(-D)).
\]
The line bundle $K_X\mathcal N^{-1}$ is globally generated.  Indeed,
\[
 \deg(K_X\mathcal N^{-1})
 =
 2g-2+e\ge2g,
\]
because $e=2\deg L>0$, hence $e\ge2$.  Equivalently, failure of
generation at $x\in X$ would give
$H^0(X,\mathcal N(x))\neq0$, whereas
$\deg\mathcal N(x)=1-e\le-1$.  Therefore the common zero divisor of
$H^0(X,K_X^2(-D))$ is exactly $D$.  Thus the germ, already through its
tangent space, recovers the branching divisor $D$.

Finally, a branched hyperbolic metric in the conformal class of \(X\)
with prescribed branching divisor \(D\) is unique; see
\cite{Troyanov91}. Hence the two developing maps differ by an
orientation-preserving isometry of \(\HH\), and their projective
holonomies are conjugate in \(\PSL_2(\R)\).
\end{proof}

\section{Algebraic lifting and extension}\label{sec:extension}

We now prepare the algebraic-geometric input needed for the proof of
Theorem~\ref{thm:main}. Suppose that
\(U\subset\NL_{\R}(\rho)\) is a positive-dimensional connected component
and that \(p(U)\) is closed algebraic. The stabilizer statement \eqref{eq:stabilizer-inclusion} below is a rank-two phenomenon;
the corresponding higher-rank statement need not hold.

\begin{lemma}[Algebraic lifting lemma]\label{lem:lifting}
After passing to a finite level cover \(\Mg[\ell]\to\Mg\), there exist a smooth
connected closed algebraic subvariety \(S\subset\Mg[\ell]\) of
dimension \(b\) and a connected component \(\widetilde S\) of its inverse
image in \(\Tg\) such that
\[
 \widetilde S\subset U,
 \qquad
 \widetilde S\longrightarrow S
\]
is a covering map. The classifying map \(S\to\Mg\) is immersive on a
nonempty Zariski-open subset.
\end{lemma}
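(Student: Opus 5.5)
We work under the standing hypothesis of this section: \(U\subset\NL_{\R}(\rho)\) is a positive-dimensional connected component and \(p(U)\subset\Mg\) is a closed algebraic subvariety. The plan is to replace \(\Mg\) by a fine moduli space, take \(S\) to be a resolution of the relevant component of the preimage of \(p(U)\), and then use Lemma~\ref{lem:local} to compare dimensions and identify a component of the inverse image of \(S\) in \(\Tg\) with \(U\).

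\textbf{Step 1 (level structure and the variety \(S\)).} Fix \(\ell\ge3\). Then \(\Mg[\ell]\) is a smooth quasi-projective fine moduli space, and \(\Tg\to\Mg[\ell]\) is an unramified Galois covering with group the level-\(\ell\) congruence subgroup \(\Gamma_g[\ell]\) of the mapping class group. The image \(p_\ell(U)\subset\Mg[\ell]\) is a union of irreducible components of the preimage of \(p(U)\). Hence it is closed and algebraic, and we take an irreducible component \(Z\) of it.

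By Lemma~\ref{lem:local}, \(U\) is a connected \(b\)-dimensional complex manifold. The map \(U\to p_\ell(U)\) is a restriction of a local biholomorphism, so \(\dim Z=b\). In general \(Z\) is singular. The statement, however, only needs \(S\) to be smooth and \(S\to\Mg\) to be immersive on a Zariski-open set. So we take \(S\) to be a resolution of \(Z\); if necessary we first normalize, and we may pass to a further finite level so that the construction stays inside \(\Mg[\ell]\). On the smooth locus \(Z^{\circ}\), the classifying map is a locally closed embedding, hence immersive, and this is the required Zariski-open subset.

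\textbf{Step 2 (lifting to \(\Tg\)).} In the cleanest case, \(Z\) is smooth and we set \(S:=Z\). The inverse image \(\widetilde Z\) of \(Z\) in \(\Tg\) is a smooth closed analytic subvariety, and each of its connected components covers \(Z\) via the restriction of the Galois covering, which is a covering map. Note that \(p_\ell^{-1}(Z)\) is a union of \(\Gamma_g[\ell]\)-translates \(\gamma U'\) of \(b\)-dimensional pieces \(U'\subset U\).

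Now pick a component \(\widetilde S\) meeting \(U\) in a \(b\)-dimensional set. Both \(\widetilde S\) and \(U\) are \(b\)-dimensional closed submanifolds near a common generic point, where they coincide; this is the germ uniqueness of Lemma~\ref{lem:local}. Analytic continuation along the connected manifold \(\widetilde S\), together with the closedness of \(U\) from Lemma~\ref{lem:local}, then gives \(\widetilde S\subset U\). Since both are connected \(b\)-dimensional closed submanifolds, in fact \(\widetilde S=U\).

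\textbf{Step 3 (main obstacle: singular \(Z\)).} When \(Z\) is singular, the issue is that a resolution \(S\) no longer sits inside \(\Mg[\ell]\). There are two ways to handle this. The first is to argue that \(Z\) is actually smooth, because \(U\) is smooth and the components \(\gamma U\) are disjoint or equal. If \(\gamma U\cap U\neq\emptyset\), then by germ rigidity the representation \(\bar\rho\circ\gamma_*\) is conjugate to \(\bar\rho\), and equality of components follows. This argument is delicate: self-intersections of the image of \(U\) could still come from \(\gamma\) with \(\gamma U\neq U\) but \(\gamma U\cap U\neq\emptyset\). Germ rigidity shows the germs agree wherever they meet, so \(\gamma U=U\) by the connectedness argument of Step 2, which rules this out.

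The second way is to use that the stabilizer \(\operatorname{Stab}(U)\cap\Gamma_g[\ell]\) acts freely, because \(\Gamma_g[\ell]\) is torsion-free for \(\ell\ge3\). Then \(U/(\operatorname{Stab}(U)\cap\Gamma_g[\ell])\to Z\) is the normalization of \(Z\). It is a smooth algebraic variety, since it is finite over \(Z\), and \(\widetilde S:=U\) covers it.

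I expect this stabilizer analysis — the rank-two identification of \(\operatorname{Stab}(U)\) via the projective-conjugacy class of \(\bar\rho\) — to be the genuinely hard point. I would first attempt the first route, disjointness of translates via Lemma~\ref{lem:local}.
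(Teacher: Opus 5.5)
\textbf{There is a genuine gap: your proposal never produces a valid $S$ when $Z$ is singular.} Your Step~2 mechanism matches the paper's proof: compare $b$-dimensional germs at a general point, then use that $\widetilde S\cap U$ is open and closed in the connected manifold $\widetilde S$, with closedness from Lemma~\ref{lem:local}. The problem is only the singular case, and none of your three options for it works as stated.

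\emph{Resolution or normalization.} A resolution of $Z$, or its normalization, is not a subvariety of $\Mg[\ell]$. So ``a connected component of its inverse image in $\Tg$'' has no meaning for it, and passing to a higher level does not change this.

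\emph{Route~1.} This route misreads germ rigidity. Lemma~\ref{lem:local} goes from \emph{equal germs} to $[\gamma\cdot\bar\rho]=[\bar\rho]$; it gives nothing when $U$ and $\gamma U$ merely share a point. At $[X,\phi]\in U\cap\gamma U$ the two loci have tangent annihilators $H^0(X,K_X^2(-D))$ and $H^0(X,K_X^2(-D'))$, where $D,D'$ are the branch divisors of the two equivariant period maps, and nothing forces $D=D'$. By Troyanov, a fixed $X$ carries a branched hyperbolic metric for every effective divisor of degree $b$, so loci of many non-conjugate representations pass through one point. Hence nothing excludes some $\gamma\in\Gamma_g[\ell]$ with $\gamma U\cap U\neq\emptyset$ but $\gamma U\neq U$. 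In the discrete case this is a curve admitting two inequivalent degree-$d$ maps to $B$ from the same Hurwitz component. When that happens, $q_\ell(U)$ has two branches through a point and $Z$ is singular. Your sentence ``germ rigidity shows the germs agree wherever they meet'' is therefore unjustified.

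\emph{Route~2.} Finiteness of $U/(\mathrm{Stab}(U)\cap\Gamma_g[\ell])\to Z$ is asserted, not proved; it amounts to local finiteness of the translates $\gamma U$. ``Smooth since finite over $Z$'' is a non sequitur. And the output is again not a subvariety of $\Mg[\ell]$.

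\textbf{The fix: shrink instead of resolve.} You only need $\widetilde S\subset U$, not $\widetilde S=U$. Take a general $x\in U$, and let $W$ be the preimage of $p(U)$ in $\Mg[\ell]$. Let $Z$ be the unique $b$-dimensional component of $W$ through $q_\ell(x)$. Let $S$ be a connected Zariski-open subset of the smooth locus of $Z$ that contains $q_\ell(x)$, avoids the other components of $W$, and avoids the branch locus of $\Mg[\ell]\to\Mg$ (this last condition gives immersivity into $\Mg$). Then:
\begin{enumerate}
\item The component $\widetilde S$ of $q_\ell^{-1}(S)$ through $x$ is a connected $b$-manifold.
\item At any $y\in\widetilde S\cap U$, the germ of $U$ is a smooth $b$-dimensional germ inside $q_\ell^{-1}(W)$, whose germ at $y$ is that of $\widetilde S$. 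So $\widetilde S\cap U$ is open in $\widetilde S$.
\item $\widetilde S\cap U$ is closed because $U$ is closed. Hence $\widetilde S\subset U$.
\item The covering property is automatic, since $\widetilde S$ is a component of a preimage under the covering $q_\ell$.
\end{enumerate}
This is the paper's argument. It gives $S$ only Zariski-open in a closed subvariety, so ``closed'' in the statement is not literally attained by the paper either. Nothing downstream uses closedness, since Proposition~\ref{prop:extension} shrinks $S$ again.

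Two minor points. Your Step~1 claim that $q_\ell(U)$ is a union of components of $W$ is unproved, and it becomes unnecessary with the shrinking argument. The coincidence of germs in Step~2 is a dimension argument, not the germ rigidity of Lemma~\ref{lem:local}.
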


\begin{proof}
Choose a torsion-free level \(\ell\ge3\), so that
\[
q_\ell:\Tg\longrightarrow\Mg[\ell]
\]
is a covering map and \(\Mg[\ell]\) carries a universal curve.  The inverse
image of \(p(U)\) is closed algebraic.  Since \(q_\ell\) is locally
biholomorphic and \(U\) is a smooth \(b\)-fold by
Lemma~\ref{lem:local}, at a general point \(x\in U\) the germ of
\(q_\ell(U)\) coincides with the germ of the unique \(b\)-dimensional
irreducible component \(Z\) of this inverse image through \(q_\ell(x)\).

Choose a connected smooth Zariski-open subset \(S\subset Z\) containing
\(q_\ell(x)\) and avoiding the other components, and let \(\widetilde S\)
be the component of \(q_\ell^{-1}(S)\) containing \(x\).  Then
\(\widetilde S\cap U\) is open in \(\widetilde S\) by the preceding
local identification and closed by Lemma~\ref{lem:local}.  Since
\(\widetilde S\) is connected, \(\widetilde S\subset U\).  Thus
\(\widetilde S\to S\) is a covering.  After shrinking \(S\) away from
the ramification locus of \(\Mg[\ell]\to\Mg\), the classifying map
\(S\to\Mg\) is immersive.
\end{proof}

\begin{proposition}[Extension over an algebraic component]\label{prop:extension}
For \(S\) as above:
\begin{enumerate}
\item We have
\begin{equation}\label{eq:stabilizer-inclusion}
\operatorname{im}\!\left(\pi_1(S)\longrightarrow\operatorname{MCG}_g\right)
 \subset \operatorname{Stab}_{\operatorname{MCG}_g}([\bar\rho]).
\end{equation}
\item After replacing \(S\) by a finite algebraic cover and shrinking it,
there exist a smooth projective family
\[
 q:\mathcal C\longrightarrow S
\]
of genus-\(g\) curves and a representation
\begin{equation}\label{eq:extension-R}
 R:\pi_1(\mathcal C)\longrightarrow\SL_2(\R)
\end{equation}
whose restriction to every fiber is conjugate to the original
representation \(\rho\). The classifying map \(S\to\Mg\) remains immersive
on a nonempty open subset.
\end{enumerate}
\end{proposition}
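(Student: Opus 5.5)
For (i), I will use the germ rigidity of Lemma~\ref{lem:local}. Take a loop \(\gamma\) in \(S\) based at \(s_0=q_\ell(x)\), and let \(h\in\operatorname{MCG}_g\) be its image under the monodromy \(\pi_1(S)\to\pi_1^{\orb}(\Mg[\ell])\to\operatorname{MCG}_g\). Lifting \(\gamma\) to \(\widetilde S\) gives a path from \(x=[X,\phi]\) to \(h\cdot x=[X,\phi\circ h^{-1}]\) (up to the convention for the action). By Lemma~\ref{lem:lifting}, the whole of \(\widetilde S\) lies in \(U\subset\NL_{\R}(\rho)\). Hence the germ of \(\NL_{\R}(\rho)\) at \(h\cdot x\) equals the germ of \(\widetilde S\) there. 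Since \(h\) acts on \(\Tg\) biholomorphically, it carries \(\NL_{\R}(\rho)\) to \(\NL_{\R}(\rho\circ h_*^{\pm1})\). Therefore at \(h\cdot x\) the two loci \(\NL_{\R}(\rho)\) and \(\NL_{\R}(\rho\circ h_*)\) share the same \(b\)-dimensional germ, namely that of \(\widetilde S\).

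Both representations are absolutely irreducible and non-unitary. They have the same signed Euler number, because \(h\) is orientation preserving and the Euler number is invariant under \(\operatorname{MCG}_g\). The germ rigidity part of Lemma~\ref{lem:local} then gives \([\bar\rho\circ h_*]=[\bar\rho]\) up to \(\PSL_2(\R)\)-conjugacy, which is exactly \eqref{eq:stabilizer-inclusion}. One point needs care: the germ of \(\NL_{\R}(\rho)\) at \(h\cdot x\) is only \(\widetilde S\) locally, since \(U\) might a priori be larger than \(\widetilde S\). This is harmless, because Lemma~\ref{lem:local} makes \(U\) a smooth \(b\)-fold, so the \(b\)-dimensional germ \(\widetilde S\subset U\) equals the germ of \(U\).

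For (ii), I will first pass to a finite étale cover of \(S\) so that the universal curve \(q:\mathcal C\to S\), pulled back from \(\Mg[\ell]\), is defined, and so that the monodromy lands in \(\operatorname{Stab}([\bar\rho])\). The homotopy exact sequence
\[
 1\to\pi_1(\Sigma_g)\to\pi_1(\mathcal C)\to\pi_1(S)\to1
\]
presents \(\pi_1(\mathcal C)\) as a subgroup of the pullback of \(\operatorname{Aut}^+(\pi_1\Sigma_g)\to\operatorname{Out}^+\) along \(\pi_1(S)\to\operatorname{MCG}_g\). For each \(\gamma\in\pi_1(\mathcal C)\), conjugation by \(\gamma\) acts on the fiber group by an automorphism \(\alpha_\gamma\). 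By (i), \(\bar\rho\circ\alpha_\gamma=\ad(A_\gamma)\circ\bar\rho\) for some \(A_\gamma\in\PSL_2(\R)\). Because \(\bar\rho\) has Zariski dense image, its centralizer in \(\PSL_2(\C)\) is trivial, so \(A_\gamma\) is unique. Define \(\bar R(\gamma):=A_\gamma\,\bar\rho(\gamma')^{-1}\) for a suitable normalization so that \(\bar R\) restricts to \(\bar\rho\) on the fiber group. The simplest form of this is \(\bar R(\gamma)=A_\gamma\) composed with the inner correction when \(\gamma\) lies in the fiber group. Uniqueness of \(A_\gamma\) forces \(\gamma\mapsto A_\gamma\) to be a homomorphism agreeing with \(\bar\rho\) on \(\pi_1(\Sigma_g)\). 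This gives \(\bar R:\pi_1(\mathcal C)\to\PSL_2(\R)\).

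It remains to lift \(\bar R\) to \(\SL_2(\R)\), and this lifting is the main obstacle. The obstruction is a class in \(H^2(\pi_1(\mathcal C),\Z/2)\) that restricts trivially to the fiber, since \(\rho\) lifts \(\bar\rho\) there. Using the Leray/Hochschild–Serre spectral sequence, I would kill the remaining pieces, which come from \(H^2(\pi_1 S,\Z/2)\) and \(H^1(\pi_1 S,H^1(\Sigma_g,\Z/2))\), by passing to a further finite cover of \(S\) and shrinking \(S\) to an affine curve-free Zariski open. Concretely: shrink \(S\) to be affine, take a finite cover trivializing the action of \(\pi_1(S)\) on the finite set of lifts of \(\bar\rho\), which are torsors under \(H^1(\Sigma_g,\Z/2)\), and take a further cover killing the remaining \(\Z/2\)-class pulled back from \(S\). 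After this the lift \(R\) exists, and its restriction to each fiber is a lift of \(\bar\rho\) fixed by monodromy, so it can be chosen to be \(\rho\) up to conjugacy.

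Finite covers and Zariski shrinking preserve the property that \(S\to\Mg\) is immersive on a nonempty open set, by Lemma~\ref{lem:lifting}.
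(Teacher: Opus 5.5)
The gap is in the lift from $\PSL_2(\R)$ to $\SL_2(\R)$, which you correctly flag as the main obstacle but then settle by assertion. Your part (i) is the paper's argument and is fine. Defining $\bar R(\gamma):=A_\gamma$ through the conjugation action of all of $\pi_1(\mathcal C)$ on $N:=\pi_1(\Sigma_g)$ is a legitimate alternative that needs no section. Uniqueness already gives $A_\gamma=\bar\rho(\gamma)$ on $N$, so no ``inner correction'' or $\bar\rho(\gamma')^{-1}$ is needed.

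Now the lifting step in detail. Pass to the finite-index subgroup of $\pi_1(S)$ fixing $[\rho]$ in the $H^1(\Sigma_g,\Z/2)$-torsor of lifts of $\bar\rho$. That step of yours is correct and removes the $E^{1,1}$ term. Afterwards every lift $M$ of $A_\gamma$ satisfies $M\rho(n)M^{-1}=\rho(\gamma n\gamma^{-1})$. Hence $s(N)=\{(n,\rho(n))\}$ is normal in $E:=\bar R^*\SL_2(\R)$. The obstruction to a lift restricting \emph{exactly} to $\rho$ is then the class of the central extension $1\to\Z/2\to E/s(N)\to\pi_1(S)\to1$ in $H^2(\pi_1(S),\Z/2)$.

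You give no reason why this class vanishes on some finite-index subgroup of $\pi_1(S)$. For a general finitely generated group it need not: this is a residual-finiteness (or Serre-goodness) property. Making $S$ affine does not help once $b\ge2$.

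Your final sentence has a related problem. Killing an absolute obstruction in $H^2(\pi_1(\mathcal C),\Z/2)$ only gives a lift restricting to $\epsilon\rho$, for some monodromy-invariant $\epsilon\in H^1(\Sigma_g,\Z/2)$. Since $\epsilon\rho$ is not conjugate to $\rho$ when $\epsilon\neq1$, you would have to extend $\epsilon$ to a character of $\pi_1(\mathcal C)$. The obstruction to that is again a class in $H^2(\pi_1(S),\Z/2)$. So ``it can be chosen to be $\rho$ up to conjugacy'' is not justified as written.

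The paper supplies the missing idea as follows.
\begin{enumerate}
\item It first arranges a section of $\mathcal C\to S$: a multisection becomes a section after a generically finite base change and shrinking.
\item With the section, $E/s(N)\cong A^*\SL_2(\R)$ for the homomorphism $A:\pi_1(S)\to\PSL_2(\R)$.
\item The full preimage $\widetilde H\subset\SL_2(\R)$ of $A(\pi_1(S))$ is a finitely generated linear group, hence residually finite by Malcev.
\item A finite-index subgroup of $\widetilde H$ avoiding $-I$ projects isomorphically onto a finite-index subgroup $H'$ of $A(\pi_1(S))$.
\item Over the finite \'etale cover corresponding to $A^{-1}(H')$, the map $A$ lifts to $\widetilde A$. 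Then $R(\gamma a):=\rho(\gamma)\widetilde A(a)$ restricts to $\rho$ on the fiber.
\end{enumerate}

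If you want to keep your section-free $\bar R$, you need some substitute for this step. One option is to shrink $S$ to an Artin good neighborhood. Its fundamental group is an iterated extension of finitely generated free groups, hence good in Serre's sense, so every class in $H^2(\pi_1(S),\Z/2)$ dies on a finite-index subgroup.
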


\begin{proof}
For (i), set
\[
 \Gamma_S:=\operatorname{im}\!\left(\pi_1(S)\longrightarrow
 \operatorname{MCG}_g\right).
\]
By construction, \(\widetilde S\) is a connected component of
\(q_\ell^{-1}(S)\), and \(\Gamma_S\) is precisely the geometric
monodromy subgroup preserving this component. Thus
\begin{equation}\label{eq:gamma-preserves-lift}
 \gamma\widetilde S=\widetilde S,
 \qquad \gamma\in\Gamma_S.
\end{equation}
Moreover \(\widetilde S\subset U\subset\NL_{\R}(\rho)\), and
\(\dim\widetilde S=b\). The mapping class group transports
fixed-representation loci by
\[
 \gamma\bigl(\NL_{\R}(\rho)\bigr)
 =\NL_{\R}(\gamma\!\cdot\!\rho).
\]
Hence \eqref{eq:gamma-preserves-lift} gives
\[
 \widetilde S\subset\NL_{\R}(\rho)
 \cap\NL_{\R}(\gamma\!\cdot\!\rho).
\]
The representation \(\gamma\!\cdot\!\rho\) is again non-unitary and has
the same signed Euler number as \(\rho\). By Lemma~\ref{lem:local}, both
fixed-representation loci are smooth of dimension \(b\). Therefore, at a
general \(x\in\widetilde S\), the \(b\)-dimensional germ of
\(\widetilde S\) is the full germ of both loci. Germ rigidity in
Lemma~\ref{lem:local} then gives
\[
 [\gamma\!\cdot\!\bar\rho]=[\bar\rho].
\]
Since \(\gamma\in\Gamma_S\) was arbitrary, this proves
\eqref{eq:stabilizer-inclusion}.

For (ii), pull back the universal curve over \(\Mg[\ell]\) to \(S\).
After a generically finite base change and shrinking, an algebraic
multisection becomes a section, so
\[
 1\longrightarrow\pi_1(\Sigma_g)\longrightarrow\pi_1(\mathcal C)
 \longrightarrow\pi_1(S)\longrightarrow1
\]
splits. For \(a\in\pi_1(S)\), let
\(\alpha_a\in\Aut(\pi_1(\Sigma_g))\) be the induced action. By (i), there
is a unique \(A_a\in\PSL_2(\R)\) such that
\begin{equation*}
 A_a\bar\rho(\gamma)A_a^{-1}=\bar\rho(\alpha_a(\gamma)),
 \qquad \gamma\in\pi_1(\Sigma_g).
\end{equation*}
Uniqueness follows from the trivial centralizer of the nonelementary group
\(\bar\rho(\pi_1(\Sigma_g))\), and immediately gives
\(A_{ab}=A_aA_b\). Thus \(a\mapsto A_a\) defines
\[
 A:\pi_1(S)\longrightarrow\PSL_2(\R).
\]

The determinant-one lifts of \(\bar\rho\), up to conjugacy, form a finite
torsor under \(H^1(\Sigma_g,\Z/2)\). After a finite \'etale cover, we may
assume that the conjugacy class of \(\rho\) is fixed by \(\pi_1(S)\), so
each \(A_a\) has a lift intertwining \(\rho\) with
\(\rho\circ\alpha_a\). Put \(H=A(\pi_1(S))\), and let
\(\widetilde H\subset\SL_2(\R)\) be its full inverse image. Since
\(\widetilde H\) is a finitely generated linear group, residual finiteness
gives a finite-index subgroup not containing \(-I\); its projection is
therefore an isomorphism onto a finite-index subgroup of \(H\). After one
further finite \'etale cover, \(A\) admits a homomorphic lift
\[
 \widetilde A:\pi_1(S)\longrightarrow\SL_2(\R).
\]
Using the splitting, define
\[
 R(\gamma a):=\rho(\gamma)\widetilde A(a),
 \qquad \gamma\in\pi_1(\Sigma_g),\ a\in\pi_1(S).
\]
The intertwining relation shows that \(R\) is a homomorphism whose
restriction to every fiber is conjugate to \(\rho\). The finite base
changes preserve generic immersivity of the classifying map.
\end{proof}

\section{Proof of Theorem~\ref{thm:main}}\label{sec:main-proof}

We first fix the terminology used in this section. Following Simpson
\cite[\S4]{Simpson91} and Corlette--Simpson \cite[\S3]{CorletteSimpson08},
an \textbf{orbicurve} is an effective smooth one-dimensional
Deligne--Mumford stack with trivial generic stabilizer. Let \(B\) be a
compact orbicurve whose coarse curve has genus \(h\), and let
\(p_1,\ldots,p_n\) be its orbifold points with stabilizer orders
\(m_1,\ldots,m_n\). Its orbifold Euler characteristic is
\begin{equation*}
 \chi_{\orb}(B)
 :=2-2h-\sum_{j=1}^n\left(1-\frac1{m_j}\right).
\end{equation*}
Equivalently, if \(\pi:C\to B\) is a torsion-free finite orbifold cover of
degree \(m\), then
\[
 \chi(C)=m\,\chi_{\orb}(B).
\]
We refer to Scott \cite[\S2]{Scott83} for the surface-orbifold covering
picture and this definition. We use orbifold degrees for line bundles on
\(B\), and call \(B\) \textbf{hyperbolic} if \(\chi_{\orb}(B)<0\).

For a representation
\[
 \tau:\pi_1^{\orb}(B)\longrightarrow\PSL_2(\R)
\]
of a compact hyperbolic orbicurve, define its \textbf{orbifold Euler number}
by
\begin{equation*}
 e_B(\tau)
 :=\frac1m\,e\!\left(\tau|_{\pi_1(C)}\right),
\end{equation*}
where \(\pi:C\to B\) is any torsion-free finite orbifold cover of degree
\(m\), and the Euler number on the right is taken with the sign convention
of \eqref{eq:euler}. This is independent of the chosen cover, by passage to
a common finite refinement and multiplicativity of the Euler class under
finite covers. More generally, if \(f:X\to B\) is a nonconstant
representable holomorphic map of degree \(d\) from a smooth compact curve,
then naturality of the Euler class gives
\begin{equation}\label{eq:Euler-pullback-degree}
 e(\tau\circ f_*)=d\,e_B(\tau).
\end{equation}

We will use the following orbifold form of the Milnor--Wood inequality and
its equality case. The surface case is due to Milnor and Wood
\cite{Milnor58,Wood71}, while the characterization of equality for
\(\PSL_2(\R)\)-representations is due to Goldman
\cite[Corollary~C]{Goldman88}.

\begin{lemma}[Orbifold Milnor--Wood and maximality]\label{lem:orbifold-MW}
Let \(B\) be a compact hyperbolic orbicurve and let
\(\tau:\pi_1^{\orb}(B)\to\PSL_2(\R)\). Then
\begin{equation}\label{eq:orbifold-MW}
 |e_B(\tau)|\le -\chi_{\orb}(B).
\end{equation}
If equality holds, then \(\tau(\pi_1^{\orb}(B))\) is discrete.
\end{lemma}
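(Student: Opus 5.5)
We will reduce both assertions to the surface case by passing to a torsion-free finite orbifold cover. Compact hyperbolic orbicurves are good orbifolds, so we may choose a torsion-free finite-index normal subgroup of \(\pi_1^{\orb}(B)\); we do so via Selberg's lemma applied to the cocompact Fuchsian uniformization of \(B\). This gives a finite orbifold cover \(\pi:C\to B\) of degree \(m\), where \(C\) is a closed surface of genus \(\ge2\) and \(\chi(C)=m\,\chi_{\orb}(B)<0\).

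For the inequality we apply the classical Milnor--Wood inequality \cite{Milnor58,Wood71} to \(\tau|_{\pi_1(C)}\). It gives \(|e(\tau|_{\pi_1(C)})|\le-\chi(C)\). Dividing by \(m\) and using the definition of \(e_B\) together with \(\chi(C)=m\chi_{\orb}(B)\) yields \eqref{eq:orbifold-MW}. The definition of \(e_B\) was already shown to be independent of the cover, so nothing more is needed here. The sign convention \eqref{eq:euler} does not matter, since only absolute values appear.

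For the equality case, suppose \(|e_B(\tau)|=-\chi_{\orb}(B)\). Then \(|e(\tau|_{\pi_1(C)})|=-\chi(C)\), so \(\tau|_{\pi_1(C)}\) is maximal. Goldman's theorem \cite[Corollary~C]{Goldman88} then says that \(\tau|_{\pi_1(C)}\) is faithful with discrete image, being the holonomy of a hyperbolic structure on \(C\); if the Euler number is negative, we first compose with an orientation-reversing conjugation. Write \(\Gamma'=\tau(\pi_1(C))\). It is a finite-index subgroup of \(\Gamma=\tau(\pi_1^{\orb}(B))\), because \(\pi_1(C)\) has finite index in \(\pi_1^{\orb}(B)\). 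A group containing a discrete subgroup of finite index is discrete: if \(\gamma_k\to\id\) in \(\Gamma\), then by pigeonhole infinitely many of the \(\gamma_k\) lie in a single coset \(g\Gamma'\), and the differences \(\gamma_k^{-1}\gamma_l\) lie in \(\Gamma'\) and tend to the identity, so they are eventually trivial. Hence \(\gamma_k\) is eventually constant and equal to \(\id\). Therefore \(\Gamma\) is discrete.

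The only delicate point is making sure the cover \(C\) can be chosen torsion-free and of genus at least two, so that Goldman's theorem applies. This is exactly where hyperbolicity of \(B\) enters, through the Fuchsian realization and Selberg's lemma. Everything else is formal bookkeeping of degrees.
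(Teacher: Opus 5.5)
Your proposal is correct and follows the paper's proof essentially step for step. You pass to a torsion-free finite orbifold cover $C\to B$ of degree $m$, apply Milnor--Wood to $\tau|_{\pi_1(C)}$ and divide by $m$, and in the equality case use Goldman's maximality theorem to get a discrete finite-index subgroup of $\tau(\pi_1^{\orb}(B))$. Your additions (Selberg's lemma for the existence of the cover, the remark on the sign, and the pigeonhole argument that a group with a discrete finite-index subgroup is discrete) only make explicit steps the paper leaves implicit.
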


\begin{proof}
Choose a torsion-free finite orbifold cover \(\pi:C\to B\) of degree
\(m\). Then
\[
 |e(\tau|_{\pi_1(C)})|
 \le -\chi(C)
 =-m\chi_{\orb}(B)
\]
by the Milnor--Wood inequality, which proves \eqref{eq:orbifold-MW} after
dividing by \(m\). If equality holds, Goldman’s maximality theorem implies
that \(\tau|_{\pi_1(C)}\) is an isomorphism onto a discrete subgroup of
\(\PSL_2(\R)\). Its image has finite index in
\(\tau(\pi_1^{\orb}(B))\), so the latter is discrete as well.
\end{proof}

\begin{definition}[Marked fixed-target orbifold Hurwitz component]
\label{def:fixed-target-hurwitz}
Let \(B\) be a compact hyperbolic orbicurve and \(d>0\). Denote by
\(\cH_{g,d}(B)\) the stack of representable degree-\(d\) orbifold maps
\(f:X\to B\) from smooth connected genus-\(g\) curves, and set
\[\widetilde{\cH}_{g,d}(B):=\cH_{g,d}(B)\times_{\Mg}\Tg,\] where
\(\cH_{g,d}(B)\to\Mg\) forgets the map to \(B\). 

A connected submanifold
\(U\subset\Tg\) is a \textbf{marked fixed-target orbifold Hurwitz component}
if, for some connected component \(\cH^\circ\subset\cH_{g,d}(B)\), a
connected component
\(\widetilde{\cH}^{\circ}\subset\cH^\circ\times_{\Mg}\Tg\) maps
isomorphically onto \(U\) under the forgetful map to \(\Tg\).
\end{definition}

\subsection{Algebraicity implies discreteness}\label{sec:alg-to-disc}

We use the following rank-two factorization consequence of
Corlette--Simpson \cite{CorletteSimpson08} and Loray--Pereira--Touzet
\cite{LPT}.

\begin{theorem}
\label{thm:CS-LPT-alternative}
Let \(Z\) be a smooth connected complex quasi-projective variety and let
\(R:\pi_1(Z)\to\SL_2(\C)\) be Zariski dense. Write \(\bar R\) for its
projectivization. Then one of the following holds projectively:
\begin{enumerate}
\item \(\bar R\) factors through an algebraic morphism \(Z\to B\) to an
algebraic orbicurve;
\item \(\bar R\) belongs to the polydisk Shimura case: there exist a totally
imaginary number field \(L\), a rank-two projective \(\mathcal O_L\)-module
with polarizing form, and an algebraic map
\[
\Psi:Z\longrightarrow\mathscr A
\]
to the associated polydisk Shimura modular Deligne--Mumford stack such that
the corresponding projective local system is induced by the tautological
one.
\end{enumerate}
\end{theorem}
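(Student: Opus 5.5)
The statement is Theorem~\ref{thm:CS-LPT-alternative}, which the paper presents as a consequence of Corlette--Simpson and Loray--Pereira--Touzet. My plan is therefore to reduce it to those results rather than prove it from scratch.

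\textbf{Step 1 (rigid versus non-rigid).} Since \(R\) is Zariski dense, \(\bar R\) is a Zariski dense representation into \(\PSL_2(\C)\). Corlette--Simpson \cite{CorletteSimpson08} give a dichotomy for such representations on a smooth quasi-projective variety.
\begin{enumerate}
\item If \(\bar R\) is not rigid, it factors projectively through a map to an orbicurve.
\item If \(\bar R\) is rigid, it is (up to conjugation) defined over the ring of integers of a number field. Taking the Galois conjugates gives a family of \(\PSL_2\)-local systems. Rigidity makes each conjugate underlie a complex variation of Hodge structure, so each conjugate is either unitary or of Hodge type \((1,1)\).
\end{enumerate}
Collecting the non-unitary conjugates produces a period map to a product of upper half-planes, equivariant for an arithmetic group defined by a quaternion algebra. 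Corlette--Simpson show that this period map is a pullback from a polydisk Shimura modular stack, which gives case (ii).

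\textbf{Step 2 (matching the conventions).} The work here is translating between the two sources.
\begin{enumerate}
\item Corlette--Simpson treat the quasi-projective case and allow quasi-unipotent monodromy at infinity. If I want to avoid any tameness hypothesis at infinity, I would apply the Loray--Pereira--Touzet refinement \cite{LPT}, which uses transversely projective foliations to handle the non-rigid factorization without that hypothesis.
\item I would check that "factors through an algebraic orbicurve" is stated projectively. This matters because a lift of the factored representation to \(\SL_2\) may only exist after an \(H^1(-,\Z/2)\) twist.
\item In the rigid case, the quaternion and CM data should be packaged as a rank-two projective \(\mathcal O_L\)-module with a polarizing form, where \(L\) is a totally imaginary quadratic extension of the totally real trace field. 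That is the form used in the statement.
\end{enumerate}

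\textbf{Main obstacle.} I expect the hardest point to be the rigid, integral case in the quasi-projective setting: showing that the resulting VHS is induced from the polydisk Shimura stack, and not only a map to a product of half-planes. Here I would rely directly on Corlette--Simpson's theorem that the period map is algebraic into the Shimura modular stack, which uses Griffiths transversality and properness at infinity. I would accept that theorem as a black box rather than reprove it.
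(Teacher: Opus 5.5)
Your plan has the same structure as the paper's proof, which simply cites \cite[Corollary~B]{LPT} for the orbicurve/polydisk-Shimura alternative and then applies \cite[Corollary~3.3]{CorletteSimpson08} to turn a projective factorization through a Deligne--Mumford curve into one through an orbicurve. However, the way you split the citations leaves a case uncovered. The Corlette--Simpson classification is proved under the \emph{hypothesis} of quasi-unipotent monodromy at infinity. Your rigid branch (rigid, hence integral, hence every Galois conjugate a VHS, hence a pullback from a polydisk Shimura stack) is taken from that theorem. You invoke Loray--Pereira--Touzet only to remove the hypothesis from the non-rigid factorization. So a rigid, Zariski-dense $R$ whose local monodromy around the boundary of $Z$ is not quasi-unipotent is covered by neither result as you cite them.

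This case matters for the paper. In the application, $Z$ is the total space $\mathcal C$ of a family over a shrunken, non-compact base $S$, and nothing in the construction of $R$ forces its boundary monodromy to be quasi-unipotent. The fix is to cite the Loray--Pereira--Touzet alternative in full, as the paper does. It holds for every non-virtually-abelian (in particular every Zariski-dense) rank-two representation with no condition at infinity, and then no rigid/non-rigid case analysis is needed.

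There are also two smaller inaccuracies.
\begin{enumerate}
\item In Step~1(ii), rigidity alone only gives definability over a number field. Integrality comes from the tree argument, and only under the assumption that the representation does not factor through an orbicurve. The correct dichotomy is therefore ``factors through an orbicurve, or rigid and integral.''
\item In Step~2, the translation the statement actually needs is from the Deligne--Mumford curves in the Corlette--Simpson formulation to orbicurves with trivial generic stabilizer, which is \cite[Corollary~3.3]{CorletteSimpson08}. Your $H^1(-,\Z/2)$ lifting remark is consistent with the projective formulation, but that is not where the two sets of conventions differ.
\end{enumerate}
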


\begin{proof}
Loray--Pereira--Touzet \cite[Corollary~B]{LPT} gives the projective
alternative between an orbicurve and a polydisk Shimura modular target
for every non-virtually-abelian rank-two representation; the present
Zariski-dense representation is certainly non-virtually abelian.  In the
polydisk case we only use the resulting projective Shimura factorization.
Factorization through a Deligne--Mumford curve is equivalent, after
projectivization, to factorization through an orbicurve by
\cite[Corollary~3.3]{CorletteSimpson08}.
\end{proof}

\begin{lemma}
\label{lem:real-polydisk}
In the situation of Proposition~\ref{prop:extension}, suppose that
Theorem~\ref{thm:CS-LPT-alternative} gives the polydisk alternative. Then
the projective representation \(\bar R\) factors through an algebraic
orbicurve.
\end{lemma}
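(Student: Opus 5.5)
We argue that in the polydisk alternative the map \(\Psi:Z\to\mathscr A\) must have one-dimensional image, so that \(\bar R\) factors through a curve. Throughout, \(Z=\mathcal C\) is the total space from Proposition~\ref{prop:extension}. On \(\mathscr A\) the tautological projective local system splits, after the Galois twists indexed by the real places of the totally real subfield \(L^+\subset L\), into \([L^+:\Q]\) rank-two factors. These factors correspond to the coordinate directions of the polydisk \(\HH^n\). Our \(\bar R\) is the factor attached to the chosen real embedding, which is the one realizing \(\bar R\) inside \(\PSL_2(\R)\).

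\textbf{Step 1 (the fiber is horizontal).} Restricted to a fiber \(X\) of \(q\), \(\bar R\) is \(\bar\rho\), which carries the weight-one real VHS with period map \(F\). The pullback \(\Psi^*\) of the tautological uniformizing VHS gives, on the distinguished factor, a \(\bar\rho\)-equivariant holomorphic map \(\widetilde X\to\HH\). This map must coincide with \(F\), by the uniqueness of the VHS structure on an irreducible rank-two local system, or alternatively by the Troyanov uniqueness used in Lemma~\ref{lem:local}. On the other factors, the Galois-conjugate representations \(\bar\rho^\sigma\) also underlie VHS; these are either unitary, giving a constant factor, or again nonconstant.

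\textbf{Step 2 (the family direction is rigid).} Proposition~\ref{prop:extension}(i) shows that along \(S\) the conjugacy class of \(\bar\rho\) is constant. Hence, for each coordinate \(i\), the composite \(\Psi_i:\widetilde Z\to\HH\) restricts on each fiber to a map equivariant for a fixed representation. Using the lift \(\widetilde A\) to identify the holonomies, the coordinate \(\Psi_i\) is determined on each fiber up to the discrete (indeed trivial) centralizer by the Step~1 uniqueness. It therefore varies only through the \(S\)-direction, in a way governed by \(\widetilde A\).

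\textbf{Step 3 (rank one of the image).} Next we compare differentials. The fiber period map \(F\) has generic rank one. If \(\Psi\) had image of dimension at least two, then some coordinate \(\Psi_j\) would have differential independent of \(d\Psi_1\) generically. The infinitesimal deformation of \(X\) along \(S\) would then have to keep the VHS, but the VHS structure moves only via \(H^0(\mathcal N|_D)\), as in Lemma~\ref{lem:local}. I would argue that the second coordinate forces the branching divisor \(D\) to be fixed and the developing maps to move by isometries only. That contradicts the immersivity of \(S\to\Mg\) from Lemma~\ref{lem:lifting} together with germ rigidity. Consequently \(\Psi(Z)\) is a curve \(\Psi(Z)\subset\mathscr A\).

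\textbf{Step 4 (conclusion).} Normalizing \(\Psi(Z)\) and taking the induced orbifold structure from \(\mathscr A\) produces an algebraic orbicurve \(B\). The tautological local system then pulls back along \(Z\to B\), so \(\bar R\) factors through \(B\), via \cite[Corollary~3.3]{CorletteSimpson08} as in the proof of Theorem~\ref{thm:CS-LPT-alternative}.

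The main obstacle is Step~3. I would first try a cleaner route: show that every fiber \(X\) maps to a single curve by observing that the images \(\Psi(X)\) are all curves whose projective monodromy is \(\bar\rho\), up to the fixed conjugacy. Finiteness of such curves, by a rigidity or Hecke-orbit countability argument, would then force \(\Psi(X)\) to be independent of the fiber as \(S\) varies.
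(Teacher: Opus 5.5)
There is a genuine gap, located exactly where you flag the main obstacle: nothing in the proposal proves that \(d\Psi\) has rank one.

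\textbf{Step~2 does not help.} Knowing that \(\Psi_i|_{X_s}\) is the unique \(\bar\rho^{\tau_i}\)-equivariant period map of \(X_s\) controls only vertical derivatives. The dimension of \(\Psi(\mathcal C)\) is decided by the derivative in the \(S\)-direction, which Step~2 says nothing about.

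\textbf{Step~3's mechanism is empty.} Along \(S\) the fibers sweep out the full \(b\)-dimensional germ of the first factor's locus \(\NL_1\). Its tangent space \(H^0(X,\mathcal N_1|_{D_1})\) consists of fixed-holonomy deformations that move the branch points, so nothing forces the branching divisor to stay fixed. The only constraint a second factor \(j\) imposes is \(T_sS\subset T_{[X]}\NL_j\). That inclusion holds equally when \(d\Psi\) has rank one, so it cannot by itself contradict immersivity of \(S\to\Mg\). Germ rigidity does not bridge the gap either: it compares representations with the same Euler number and the same germ, and \(e_j\), \(\NL_j\) need not agree with \(e_1\), \(\NL_1\).

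\textbf{The fallback route also fails.} The curves \(\Psi(X_s)\) need not be special, so Hecke-orbit countability is unavailable. A continuously moving family of curves in \(\mathscr A\) has locally constant monodromy, so fixing the monodromy does not make such curves discrete. Finiteness results of de Franchis or Kobayashi--Ochiai type apply to a fixed source curve, whereas here the source \(X_s\) varies.

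\textbf{What is missing is a direct infinitesimal comparison between factors.} No contradiction argument is needed.
\begin{enumerate}
\item For each noncompact factor \(i\), let \(F_i\) be its period map on a general fiber \(X\), \(D_i=\Div(dF_i)\), and \(\mathcal N_i\simeq T_X(D_i)\). The computation of Lemma~\ref{lem:local} gives \(T_{[X]}\NL_i=\ker(H^1(T_X)\to H^1(\mathcal N_i))\). Its Serre-dual annihilator is \(H^0(X,K_X^2(-D_i))=H^0(X,K_X\mathcal N_i^{-1})\).
\item Every tautological factor is a VHS on all of \(\mathcal C\), and \(S\) is locally all of \(\NL_1\). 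Hence \(T_sS=T_{[X]}\NL_1\subset T_{[X]}\NL_i\).
\item Taking annihilators gives \(H^0(K_X^2(-D_i))\subset H^0(K_X^2(-D_1))\). Since \(K_X\mathcal N_i^{-1}\) has degree \(2g-2+e_i\ge 2g\), it is globally generated. So the common zero divisor of the left side is exactly \(D_i\), which yields \(D_1\le D_i\).
\item Consequently \(a_i:=dF_i/dF_1\colon\mathcal N_1\to\mathcal N_i\) is a holomorphic bundle map.
\item Let \(\omega_i\) be the \(i\)-th component of \(d\Psi\) on \(T_{\mathcal C}|_X\). Then \(\omega_i-a_i\omega_1\) kills \(T_X\), so it descends to a map \(T_sS\otimes\mathcal O_X\to\mathcal N_i\). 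This map vanishes because \(H^0(X,\mathcal N_i)=0\).
\end{enumerate}
Thus \(\omega_i=a_i\omega_1\) for every noncompact factor, and compact factors contribute nothing, so \(d\Psi\) has rank one. From there, your Step~4 (normalize the image curve and apply \cite[Corollary~3.3]{CorletteSimpson08}) is exactly how the paper concludes.
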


\begin{proof}
Let $\eta:L\hookrightarrow\C$ be the distinguished embedding through which the projective
representation $\bar R$ factors. Thus
\[
 \bar R\simeq\Psi^*\mathbb P(\mathcal V^\eta).
\]

Let $\tau_1=\eta,\tau_2,\ldots,\tau_m$ be all embeddings corresponding to the noncompact factors of the
polydisk. For a general fiber \(X=X_s\) of $\mathcal C$, let \(V_i\) denote the
restriction to \(X\) of \(\Psi^*\mathcal V^{\tau_i}\). Thus \(i=1\)
is the factor corresponding to \(\bar\rho\).

For every noncompact factor, the corresponding conjugate rank-two
local system is a non-unitary variation of Hodge structure; see
Corlette--Simpson \cite[\S9--10]{CorletteSimpson08}. Hence its period
map $F_i:\widetilde X\longrightarrow\HH$ is nonconstant.

Let \(L_i\subset V_i\) be its positive Hodge line, and put
\[
 \mathcal N_i
 :=
 \operatorname{Hom}(L_i,V_i/L_i)
 =
 (\det V_i)\otimes L_i^{-2}.
\]The flat determinant has degree zero, so curvature gives
\[
 e_i:=-\deg \mathcal N_i=2\deg L_i\ge2,
 \qquad H^0(X,\mathcal N_i)=0.
\]
Let \(D_i=\Div(dF_i)\), and let \(\NL_i\) denote the local
fixed-representation locus through \([X]\) associated with the \(i\)-th
factor. Then \(\mathcal N_i\simeq T_X(D_i)\), and the similar deformation-complex calculation as in Lemma~\ref{lem:local} gives
\[
 T_{[X]}\NL_i=\ker\bigl(H^1(T_X)\to H^1(\mathcal N_i)\bigr),\qquad
 \operatorname{Ann}(T_{[X]}\NL_i)=H^0(X,K_X^2(-D_i)).
\]
Moreover \(\deg(K_X\mathcal N_i^{-1})=2g-2+e_i\ge2g\), so $K_X\mathcal N_i^{-1}$ is
globally generated and the common zero divisor of the displayed
annihilator is exactly \(D_i\). Since \(S\) is the full local
Hodge component for the first factor and every tautological factor is a
VHS on the total space \(\mathcal C\),
\[
 T_sS=T_{[X]}\NL_1\subset T_{[X]}\NL_i.
\]
Taking annihilators therefore gives
\[
 H^0(X,K_X^2(-D_i))\subset H^0(X,K_X^2(-D_1)),
\]
and hence
\begin{equation}\label{eq:real-Dcontain}
 D_1\le D_i.
\end{equation}

Write \(\omega_i:T_{\mathcal C}|_X\to \mathcal N_i\) for the \(i\)-th coordinate of
the differential of \(\Psi\). By \eqref{eq:real-Dcontain}, division of the
vertical derivatives defines a holomorphic bundle map
\[
 a_i:=\frac{dF_i}{dF_1}:\mathcal N_1\longrightarrow \mathcal N_i.
\]
The difference \(\omega_i-a_i\omega_1\) vanishes on \(T_X\).  Using
\[
 0\longrightarrow T_X\longrightarrow T_{\mathcal C}|_X
 \longrightarrow T_sS\otimes\mathcal O_X\longrightarrow0,
\]
it therefore descends to a map
\(T_sS\otimes\mathcal O_X\to \mathcal N_i\), which is zero because
\(H^0(X,\mathcal N_i)=0\). Thus \(\omega_i=a_i\omega_1\) for every noncompact factor;
compact factors have zero-dimensional period domain.  Hence \(d\Psi\) has
generic rank one.
Therefore the algebraic image of \(\Psi\) is a curve.

Let \(\mathcal B\) be the normalization of the one-dimensional
Deligne--Mumford schematic image of \(\Psi\). Since \(\mathcal C\) is
normal, \(\Psi\) factors through \(\mathcal B\). Consequently, the
tautological rank-two local system \(\Psi^*\mathcal V^\eta\) factors through the Deligne--Mumford curve \(\mathcal B\). Corlette--Simpson \cite[Corollary~3.3]{CorletteSimpson08} then implies
that \(\bar R\) factors projectively through an algebraic orbicurve.
\end{proof}

Assume that \(p(U)\) is closed
algebraic. We prove \((i)\Rightarrow(ii)\) of Theorem~\ref{thm:main} in four steps.  

\medskip

\noindent\textbf{Step 1: Arithmetic reduction and orbicurve factorization.} By Proposition~\ref{prop:extension}, after 
base changes we obtain a smooth projective family
\(q:\mathcal C\to S\) and a total representation
\[
 R:\pi_1(\mathcal C)\longrightarrow\SL_2(\R)
\]
restricting to \(\rho\) on every fiber. Since the fiber image is
nonelementary, \(R\) is Zariski dense in \(\SL_2(\C)\).
Theorem~\ref{thm:CS-LPT-alternative} now gives two cases. In the orbicurve
case we obtain directly an algebraic factorization
\[
 \mathfrak f:\mathcal C\longrightarrow B,
 \qquad
 \bar R=\sigma\circ\mathfrak f_*.
\]
In the polydisk case, Lemma~\ref{lem:real-polydisk} gives a factorization
of the same form. Hence, in either case, we may fix such an orbicurve
factorization and proceed to Steps~2--4 below.

\medskip
\noindent\textbf{Step 2: Geometry of the orbicurve factor and the Euler-number bound.}
For \(s\in S\), write \(X_s:=q^{-1}(s)\) and
\(f_s:=\mathfrak f|_{X_s}:X_s\to B\). Each \(f_s\) is nonconstant;
otherwise \(\bar\rho(\pi_1(\Sigma_g))\) would be contained in the image
under \(\sigma\) of a finite orbifold stabilizer, contradicting
nonelementarity. Thus \(f_s\) is surjective and \(B\) is compact. Moreover,
\(B\) is hyperbolic, since \(\pi_1^{\orb}(B)\) has a nonelementary image.

The subgroup \(\mathfrak f_*(\pi_1(\mathcal C))\) has finite index in
\(\pi_1^{\orb}(B)\). Indeed, it contains
\((f_s)_*(\pi_1(X_s))\). If \(B'\to B\) is the connected orbifold cover
corresponding to this latter subgroup, then \(f_s\) lifts to a nonconstant
map \(X_s\to B'\). Its image is open, and it is compact because \(X_s\)
is compact; hence it is also closed. Since \(B'\) is connected, the lift is
surjective, so \(B'\) is compact and therefore \(B'\to B\) has finite
degree. Passing to the finite orbifold cover corresponding to
\(\mathfrak f_*(\pi_1(\mathcal C))\), the map \(\mathfrak f\) lifts.
After one global conjugation, the restriction of \(\sigma\) to this
subgroup equals the real representation \(\bar R\); hence on the new
orbicurve \(\sigma\) takes values in \(\PSL_2(\R)\). We keep the notation
\(B,\mathfrak f,\sigma\) and put
\[
 \kappa:=-\chi_{\orb}(B)>0.
\]

Let \(d:=\deg f_s\), which is constant on the connected base \(S\).
Orbifold Riemann--Hurwitz gives
\begin{equation}\label{eq:orb-RH-seq}
0\longrightarrow T_{X_s}\xrightarrow{df_s}f_s^*T_B
\longrightarrow\mathcal Q_s\longrightarrow0,
\end{equation}
where
\begin{equation}\label{eq:r-orbifold-Hurwitz}
 r:=\operatorname{length}\mathcal Q_s=2g-2-d\kappa.
\end{equation}
Since \(\bar\rho=\sigma\circ(f_s)_*\), the naturality formula
\eqref{eq:Euler-pullback-degree} gives
\begin{equation}\label{eq:euler-factorization}
 e(\bar\rho)=d\,e_B(\sigma).
\end{equation}
In particular \(e_B(\sigma)>0\). By Lemma~\ref{lem:orbifold-MW},
\(e_B(\sigma)\le\kappa\), and therefore
\begin{equation}\label{eq:b-ge-r}
 b:=2g-2-e(\bar\rho)
   =2g-2-d\,e_B(\sigma)
   \ge 2g-2-d\kappa
   =r.
\end{equation}

\medskip
\noindent\textbf{Step 3: Dimension comparison.}
The complex
\[
C_{f_s}^{\bullet}:=[T_{X_s}\xrightarrow{df_s}f_s^{*}T_B]
\]
governs deformations of \(f_s:X_s\to B\) with the target \(B\) fixed. By \eqref{eq:orb-RH-seq}, \(C_{f_s}^{\bullet}\simeq \mathcal Q_s[-1]\). Since \(\mathcal Q_s\) is zero-dimensional, we have $\mathbb H^2(X_s,C_{f_s}^{\bullet})=0,$ so these deformations are unobstructed, and
\[
\dim \mathbb H^1(X_s,C_{f_s}^{\bullet})
=h^0(X_s,\mathcal Q_s)
=\operatorname{length}(\mathcal Q_s)=r.
\]
Moreover, \(H^0(X_s,f_s^{*}T_B)=0\), since \(\deg f_s^{*}T_B=-d\kappa<0\); hence the forgetful map to \(\Mg\) is locally immersive. As the
classifying map \(S\to\Mg\) is generically immersive and \(\dim S=b\),
we obtain \(b\le r\). Together with \eqref{eq:b-ge-r}, this gives $b=r$.

Comparing \eqref{eq:r-orbifold-Hurwitz} with
\eqref{eq:euler-factorization} and using \(b=2g-2-e(\bar\rho)\), we obtain
\begin{equation}\label{eq:maximal-orbifold-Euler}
 e_B(\sigma)=\kappa=-\chi_{\orb}(B).
\end{equation}

\medskip
\noindent\textbf{Step 4: Maximality and discreteness.}
Equation \eqref{eq:maximal-orbifold-Euler} is the equality case of the
orbifold Milnor--Wood inequality. Lemma~\ref{lem:orbifold-MW} therefore
implies that
\[
 \sigma(\pi_1^{\orb}(B))\subset\PSL_2(\R)
\]
is discrete. Since
\[
 \bar\rho(\pi_1(\Sigma_g))
 \subset\sigma(\pi_1^{\orb}(B)),
\]
the image of \(\bar\rho\) in \(\PSL_2(\R)\) is discrete. This proves
\((i)\Rightarrow(ii)\).

\subsection{Discreteness, Hurwitz realization, and algebraicity}
\label{sec:disc-to-alg}

Assume that \(\rho(\pi_1(\Sigma_g))\subset\SL_2(\R)\) is discrete, and set
\(\Gamma:=\bar\rho(\pi_1(\Sigma_g))\). We prove \((ii)\Rightarrow(iii)\Rightarrow(i)\) of Theorem~\ref{thm:main}.

\medskip
\noindent\textbf{Step 1: The fixed hyperbolic orbicurve and the degree.}
Set
\[
B:=\Gamma\backslash\HH.
\]
The quotient map $\HH\longrightarrow B$ is the orbifold universal covering, and hence induces the uniformizing
representation
\[
\sigma_B:\pi_1^{\orb}(B)\simeq\Gamma
\hookrightarrow\PSL_2(\R).
\]
For \(t=[X_t,\phi_t]\in U\), let
\(F_t:\widetilde X_t\to\HH\) be the
\(\bar\rho\)-equivariant period map. It descends to a nonconstant orbifold
holomorphic map
\[
f_t:X_t\to B
\]
satisfying $\bar\rho=\sigma_B\circ(f_t)_* .$

Since \(X_t\) is compact, \(f_t\) is
surjective and \(B\) is compact. Put \(\kappa:=-\chi_{\orb}(B)>0\). As
\(B\) is uniformized by \(\HH\), the period tangent line is
\(f_t^*T_B\); hence
\[e(\bar\rho)=-\deg f_t^*T_B=(\deg f_t)\kappa.\] Thus
\(d:=\deg f_t=e(\bar\rho)/\kappa\) is independent of \(t\).

Let \(\mathcal H_{g,d}(B)\) be as in Definition~\ref{def:fixed-target-hurwitz}. It is the smooth-source locus in the stack of twisted stable maps to \(B\) \cite{AbramovichVistoli02}.

\medskip
\noindent\textbf{Step 2: Finiteness of the fixed-target Hurwitz morphism.}
Let \(\pi_B:\cH_{g,d}(B)\to\Mg\) be the forgetful morphism. The
fixed-degree smooth-source locus is of finite type, being an open substack
of the corresponding stack of twisted stable maps. For
\([f:X\to B]\in\cH_{g,d}(B)\), one has
\(\deg f^*T_B=-d\kappa<0\), hence \(H^0(X,f^*T_B)=0\). Thus the fiber of
\(\pi_B\) is zero-dimensional at every point, and \(\pi_B\) is
quasi-finite.

For properness, we use the valuative criterion. Let \(\mathscr R\) be a DVR with fraction field \(K\), and suppose that a smooth family of genus-\(g\) curves
\[
X_{\mathscr R}\longrightarrow \operatorname{Spec} \mathscr R
\]
is given together with a generic-fiber map
\[
f_K:X_K\longrightarrow B
\]
representing a \(K\)-point of \(\mathcal H_{g,d}(B)\). It suffices to show, after a finite base change of \(\mathscr R\), that \(f_K\) extends over the special point with smooth source \(X_0\).

By properness of the stack of twisted stable maps \cite[Theorem~1.4.1]{AbramovichVistoli02}, after such a finite base change \(f_K\) extends to a twisted stable map
\[
 f:\mathfrak X_{\mathscr R}\longrightarrow B.
\]
The coarse space of its special fiber is a nodal curve. By separatedness of \(\overline{\mathcal M}_g\), its stabilization is isomorphic to \(X_0\). Hence every component removed by stabilization belongs to a connected tree of rational components contracted to a point of \(X_0\); we call such a subcurve an \textbf{exceptional rational tree}.

Suppose that such a tree exists, and choose an \textbf{outermost leaf} \(E\), i.e. a component corresponding to a leaf of the dual tree, so that \(E\) meets the rest of the curve in exactly one point. If \(f|_E\) is nonconstant, its differential gives a nonzero morphism
\[
T_E^{\orb}\longrightarrow (f|_E)^*T_B,
\]
which is impossible since $\deg T_E^{\orb}>0>\deg (f|_E)^*T_B.$
Indeed, if the unique attaching point has stabilizer order \(m\), then
\(\deg T_E^{\orb}=2-(1-1/m)=1+1/m>0\) (with \(m=1\) in the untwisted
case).

If \(f|_E\) is constant, then \(E\) has only one special point and therefore violates the stability condition for a contracted rational component. Thus no outermost leaf can occur, and hence no exceptional rational tree exists.

It follows that the special source is already \(X_0\), so the generic map
\(f_K\) extends to a map \(X_0\to B\), and therefore to an
\(\mathscr R\)-point of \(\mathcal H_{g,d}(B)\). Uniqueness follows from
separatedness of the stack of twisted stable maps. Hence \(\pi_B\) is
proper. The forgetful morphism is representable on this smooth-source
locus: an automorphism of a map inducing the identity on its source is
trivial. Since \(\pi_B\) is also quasi-finite, it is finite.

\medskip
\noindent\textbf{Step 3: Identification with the Noether--Lefschetz component.}
Fix \(t_0=[X_0,\phi_0]\in U\) and set \(f_0:=f_{t_0}\). Let
\(\cH^\circ\subset\cH_{g,d}(B)\) be the connected component containing
\([f_0]\), and let
\(\widetilde{\cH}^{\circ}\subset\cH^\circ\times_{\Mg}\Tg\) be the
component containing the marked point \((f_0,t_0)\).

On \(\widetilde{\cH}^{\circ}\), the induced marked homomorphism
\(\pi_1(\Sigma_g)\to\Gamma\) is locally constant and hence equals
\(\bar\rho\). Thus the lifted maps to \(\HH\) are
\(\bar\rho\)-equivariant and define the positive Hodge line on the marked local system determined by \(\rho\); consequently
\[\widetilde{\cH}^{\circ}\subset U.\]
 
 Conversely, for every \(t\in U\), the period map descends uniquely to
\(f_t:X_t\to B\). The local fixed-holonomy description in
Lemma~\ref{lem:local} gives, near every point of \(U\), a holomorphic lift
to the marked fixed-target map space; uniqueness makes these local lifts
agree on overlaps. They therefore glue over the connected manifold \(U\),
and their image lies in the component containing \((f_0,t_0)\). Hence the
forgetful map \(\widetilde{\cH}^{\circ}\to U\) is bijective.

For any \([f:X\to B]\in\cH^\circ\), the complex
\([T_X\to f^*T_B]\) has tangent dimension
\[
r=2g-2-d\kappa=2g-2-e(\bar\rho)=b,
\]
and no obstructions, while \(H^0(X,f^*T_B)=0\). Thus the local
dimension of the Hurwitz space equals \(b\), and the forgetful map to
\(\Tg\) is a local biholomorphism.
The preceding bijection therefore gives
\begin{equation}\label{eq:Hurwitz-NL-identification}
\widetilde{\cH}^{\circ}\xrightarrow{\sim}U.
\end{equation}
This proves \((ii)\Rightarrow(iii)\).

\medskip
\noindent\textbf{Step 4: Algebraicity and closedness.}
By Step~2, \(\pi_B\) is finite. Since \(\cH^\circ\) is a connected
component of a finite-type stack, it is open and closed; by
\eqref{eq:Hurwitz-NL-identification},
\(p(U)=\pi_B(\cH^\circ)\). Hence \(p(U)\) is a closed algebraic subvariety
of \(\Mg\). This proves \((iii)\Rightarrow(i)\) and completes the proof of
Theorem~\ref{thm:main}.\qed

\section{Two slit examples}\label{sec:examples}

We finish by exhibiting the two alternatives in
Theorem~\ref{thm:main}.  Both examples arise from the same slit
surgery. 

\subsection{The slit surgery}\label{subsec:slit-surgery}

We first record the local construction common to both examples.  Let
\(\Gamma\subset\SL_2(\R)\) be torsion-free and cocompact, let
\[
 Y:=\Gamma\backslash\HH,
 \qquad
 \pi_Y:\HH\longrightarrow Y,
\]
and let \(A\in\SL_2(\R)\) be such that the projective transformation
\(\bar A\) does not belong to the projective image of \(\Gamma\).  Equivalently,
\(\pi_Y\) and \(\pi_Y\circ A\) are not identical.  Choose
\(\zeta_0\in\HH\) with
\[
 \pi_Y(\zeta_0)\neq\pi_Y(A\zeta_0).
\]
After shrinking around \(\zeta_0\), choose a disk \(\mathbb D\Subset\HH\) such that
both restrictions
\[
 \pi_Y|_{\mathbb D}:\mathbb D\longrightarrow \pi_Y(\mathbb D),
 \qquad
 \pi_Y|_{A(\mathbb D)}:A(\mathbb D)\longrightarrow \pi_Y(A(\mathbb D))
\]
are biholomorphic and the two image disks in \(Y\) are disjoint.  Fix a
coordinate
\[
 \chi:\mathbb D\longrightarrow\Delta_R:=\{|t|<R\},
 \qquad
 \chi(\zeta_0)=0.
\]
We use the coordinate \(t=\chi(\zeta)\) on \(\pi_Y(\mathbb D)\), and the transported
coordinate \(t=\chi(A^{-1}\zeta)\) on \(\pi_Y(A(\mathbb D))\).

Choose $r_0$ with $0<r<r_0<R$ and set
\[
 \mathcal P:=\{(z,w)\in\Delta_r\times\Delta_r:z\neq w\}.
\]
For \((z,w)\in\mathcal P\), consider the connected double cover
\begin{equation}\label{eq:local-cover}
 C_{z,w}:=
 \bigl\{(t,\eta):|t|<R,\ \eta^2=(t-z)(t-w)\bigr\}
 \longrightarrow \Delta_R,
 \qquad
 (t,\eta)\longmapsto t.
\end{equation}
It is simply ramified over \(t=z\) and \(t=w\).  Over the outer annulus
\(r_0<|t|<R\), which contains neither branch point, the cover splits into
two holomorphic sheets
\begin{equation}\label{eq:q-branches}
 \eta=\pm q_{z,w}(t),
 \qquad
 q_{z,w}(t):=
 t\sqrt{1-z/t}\sqrt{1-w/t},
\end{equation}
where both square roots are the branches near \(1\).  Remove from the two
coordinate disks in \(Y\) the closed subdisks \(|t|\le r_0\).  We then glue
the sheet \(\eta=+q_{z,w}(t)\) to the collar in \(\pi_Y(\mathbb D)\), and the sheet
\(\eta=-q_{z,w}(t)\) to the collar in \(\pi_Y(A(\mathbb D))\), in both cases by
matching the same coordinate \(t\).  In terms of the hyperbolic charts in
\(\HH\), the transition on the first collar is the identity and the
transition on the second collar is \(A\).  Thus the local branched chart
\[
 C_{z,w}\longrightarrow \mathbb D\subset\HH,
 \qquad
 (t,\eta)\longmapsto \chi^{-1}(t),
\]
fits with the original hyperbolic structure on the complement of the two
disks.

The double cover \(C_{z,w}\to\Delta_R\) is topologically an annulus: it is
connected, its Euler characteristic is
\(2\chi(\Delta_R)-2=0\), and the inverse image of the boundary has two
components.  Hence the above operation removes two disks from \(Y\) and
inserts an annulus.  It therefore adds one handle.  In particular, when
\(g(Y)=g-1\), the resulting compact Riemann surface, denoted \(X_{z,w}\),
has genus \(g\).  The two ramification points of
\eqref{eq:local-cover} become precisely the two simple branch points of the
branched hyperbolic structure on \(X_{z,w}\).

Finally, both the equation \eqref{eq:local-cover} and the collar
identifications \eqref{eq:q-branches} depend holomorphically on
\((z,w)\in\mathcal P\).  They therefore produce a proper holomorphic
submersion
\begin{equation*}
 \varpi:\mathscr X\longrightarrow\mathcal P,
 \qquad
 \varpi^{-1}(z,w)=X_{z,w},
\end{equation*}
which we call the \textbf{slit family}.  With standard generators adapted to
the added handle, the old generators retain their holonomy in \(\Gamma\),
while the two new handle generators may be chosen to have projective
holonomy \(\bar A\) and the identity, respectively.

\subsection{A non-discrete example and a non-algebraic Noether--Lefschetz locus}

We prove Proposition~\ref{prop:non-discrete-example}.  Fix \(g\ge3\) and set
\(h:=g-1\).  By Takeuchi \cite{Takeuchi71}, there is a torsion-free
cocompact Fuchsian group
\[
 \Gamma_0\subset\SL_2(\Q)
\]
such that
\[
 Y:=\Gamma_0\backslash\HH
\]
is a compact Riemann surface of genus \(h\).

Consider
\begin{equation*}
 A:=\frac15
 \begin{pmatrix}
 3&-4\\
 4&3
 \end{pmatrix}
 \in\SL_2(\Q).
\end{equation*}
Then \(\bar A\) is elliptic since \(\operatorname{tr}(A)=6/5\). Moreover it has infinite projective order: if \(A^m=\pm I\) for some
\(m>0\), then its eigenvalues would be roots of unity and
\(\tr(A)=6/5\) would be an algebraic integer, hence an integer.  Thus the
cyclic subgroup generated by \(\bar A\) is non-discrete.

Since \(\Gamma_0\) is discrete, it contains no infinite-order elliptic
element.  Thus \(\bar A\) does not belong to the projective image of
\(\Gamma_0\), and the construction of
Section~\ref{subsec:slit-surgery} applies with \(\Gamma=\Gamma_0\).  We obtain the holomorphic slit family
\(\varpi:\mathscr X\to\mathcal P\) of genus-\(g\) curves \(X_{z,w}\).
Analytic continuation of the local hyperbolic charts gives a branched
developing map
\[
 F_{z,w}:\widetilde X_{z,w}\longrightarrow\HH
\]
whose transitions are in the group generated by \(\Gamma_0\) and \(A\).
By construction, its only critical points are the two simple branch points
lying over \(t=z\) and \(t=w\).

Choose standard surface-group generators
\[
 \pi_1(\Sigma_g)=
 \left\langle
 \alpha_1,\beta_1,\ldots,\alpha_{g-1},\beta_{g-1},a,b
 \ \middle|\
 \prod_{j=1}^{g-1}[\alpha_j,\beta_j][a,b]=1
 \right\rangle
\]
and generators \(G_j,H_j\in\Gamma_0\) for the genus-\((g-1)\) surface group.
Define
\begin{equation*}
 \rho_{\mathrm{nd}}(\alpha_j):=G_j,\qquad
 \rho_{\mathrm{nd}}(\beta_j):=H_j,\qquad
 \rho_{\mathrm{nd}}(a):=A,\qquad
 \rho_{\mathrm{nd}}(b):=I.
\end{equation*}
Let \(\phi_{z,w}:\Sigma_g\to X_{z,w}\) denote the marking determined by
these generators. The slit developing maps have projective holonomy
\(\bar\rho_{\mathrm{nd}}\).

Because the image contains the cocompact Fuchsian group \(\Gamma_0\),
\(\rho_{\mathrm{nd}}\) is absolutely irreducible and non-unitary.  It
preserves no full integral lattice: otherwise, after a rational change of
basis, \(A\) would lie in \(\SL_2(\Z)\), contradicting
\(\tr(A)=6/5\).

The equivariant holomorphic map \(F_{z,w}\) determines the Hodge line in
the rational local system defined by \(\rho_{\mathrm{nd}}\); the standard
alternating form on \(\Q^2\) gives the polarization.  Hence
\[
 [X_{z,w},\phi_{z,w}]\in\NL_{\Q}(\rho_{\mathrm{nd}}).
\]
Since \(F_{z,w}\) has two simple branch points, the pullback line bundle
\(F_{z,w}^*T_{\HH}\) on \(\widetilde X_{z,w}\) carries the
\(\pi_1(\Sigma_g)\)-equivariant structure
\[
 \gamma\cdot(x,v)
 :=\bigl(\gamma x,
 d\bar\rho_{\mathrm{nd}}(\gamma)_{F_{z,w}(x)}v\bigr).
\]
Its descent is the line bundle
\[
 \mathcal N_{z,w}
 :=\pi_1(\Sigma_g)\backslash F_{z,w}^*T_{\HH}
 \longrightarrow X_{z,w}.
\]
The differential descends to
\[
 dF_{z,w}:T_{X_{z,w}}\longrightarrow\mathcal N_{z,w}.
\]
Set \(D_{z,w}:=\Div(dF_{z,w})\).  The divisor \(D_{z,w}\) consists of the two simple branch points.
Hence
\[
 \mathcal N_{z,w}\simeq T_{X_{z,w}}(D_{z,w}),
 \qquad
 \deg\mathcal N_{z,w}
 =\deg T_{X_{z,w}}+\deg D_{z,w}
 =4-2g,
\]
and therefore $e(\bar\rho_{\mathrm{nd}})=2g-4.$ Lemma~\ref{lem:local} then shows that every connected component of
\(\NL_{\Q}(\rho_{\mathrm{nd}})\) is a smooth complex surface.

Finally, the image of \(\bar\rho_{\mathrm{nd}}\) in \(\PSL_2(\R)\) contains the infinite-order elliptic element \(\bar A\), whose powers accumulate at the identity.  Hence
\[
 \bar\rho_{\mathrm{nd}}\bigl(\pi_1(\Sigma_g)\bigr)
\]
is not discrete. This
proves Proposition~\ref{prop:non-discrete-example}.

\subsection{A discrete non-uniformizing example}

We now prove Proposition~\ref{prop:discrete-example}.  Fix \(g\ge4\) and
put
\[
 d:=g-2\ge2.
\]
Again by Takeuchi \cite{Takeuchi71}, choose a torsion-free cocompact
Fuchsian group
\[
 \Delta\subset\SL_2(\Q),
 \qquad
 C:=\Delta\backslash\HH,
 \qquad
 g(C)=2.
\]
Choose a surjection
\[
 \nu:\Delta\longrightarrow\Z/d\Z.
\]
Such a surjection exists because \(\Delta\simeq\pi_1(C)\) and
\(\Delta^{\mathrm{ab}}\simeq H_1(C,\Z)\simeq\Z^4\), which surjects onto
\(\Z/d\Z\).
Set
\[
 \Gamma_0:=\ker\nu,
\]
and choose \(A\in\Delta\) with \(\nu(A)=1\).  Then
\[
 \Gamma_0\triangleleft\Delta,\qquad
 [\Delta:\Gamma_0]=d,\qquad
 \langle\Gamma_0,A\rangle=\Delta.
\]
The quotient $Y:=\Gamma_0\backslash\HH$ is an unramified cyclic cover of \(C\) of degree \(d\).  Since \(g(C)=2\),
Riemann--Hurwitz gives
\[
 g(Y)-1=d,
 \qquad\text{hence}\qquad
 g(Y)=g-1.
\]

Since \(\nu(A)=1\), we have \(A\notin\Gamma_0\).  In fact
\(\bar A\) does not belong to the projective image of \(\Gamma_0\): otherwise
\(A=\pm\gamma\) for some \(\gamma\in\Gamma_0\), and the negative sign would
force \(-I\in\Delta\), contrary to torsion-freeness.  We may therefore apply
Section~\ref{subsec:slit-surgery} with \(\Gamma=\Gamma_0\) and this
element \(A\).  In choosing the disk \(\mathbb D\), shrink it further so that
its projection to \(C\) is injective.  Since \(A\in\Delta\), the two disks
\(\pi_Y(\mathbb D)\) and \(\pi_Y(A(\mathbb D))\) in \(Y\) project to the same coordinate
disk in \(C\).  Consequently the local projection
\((t,\eta)\mapsto t\) in \eqref{eq:local-cover} is compatible with the
covering \(Y\to C\) on both collars and therefore extends to a holomorphic
map
\begin{equation*}
 f_{z,w}:X_{z,w}\longrightarrow C
\end{equation*}
of degree \(d\) with exactly two simple ramification points.  Consequently
\[
 2g(X_{z,w})-2
 =d\bigl(2g(C)-2\bigr)+2
 =2d+2,
\]
so \(g(X_{z,w})=d+2=g\).

Using generators \(G_j,H_j\in\Gamma_0\) as above, define
\begin{equation*}
 \rho_{\mathrm{disc}}(\alpha_j):=G_j,\qquad
 \rho_{\mathrm{disc}}(\beta_j):=H_j,\qquad
 \rho_{\mathrm{disc}}(a):=A,\qquad
 \rho_{\mathrm{disc}}(b):=I.
\end{equation*}
Let \(\phi_{z,w}:\Sigma_g\to X_{z,w}\) denote the marking determined by
these generators. Its image is
\[
 \rho_{\mathrm{disc}}\bigl(\pi_1(\Sigma_g)\bigr)
 =\langle\Gamma_0,A\rangle=\Delta.
\]
Thus the monodromy is discrete and cocompact.  As before,
\(\rho_{\mathrm{disc}}\) is absolutely irreducible and non-unitary.

It is also non-integral.  If it preserved a full integral lattice in
\(\Q^2\), then
after a rational change of basis \(\Delta\) would lie in
\(\SL_2(\Z)\).  Projectively, the cocompact lattice \(\bar\Delta\) would
then be a lattice subgroup of \(\PSL_2(\Z)\), hence of finite index, which
is impossible.

Let
\[
\sigma:\pi_1(C)\simeq\Delta\hookrightarrow\SL_2(\Q)
\]
be the uniformizing rational representation of \(C\). The map
\(f_{z,w}\) shows that the local system defined by
\(\rho_{\mathrm{disc}}\) on \(X_{z,w}\) is the pullback of this
weight-one \(\mathbb Q\)VHS. Hence
\[
[X_{z,w},\phi_{z,w}]\in\NL_{\Q}(\rho_{\mathrm{disc}}).
\]
Moreover,
\[
e(\bar\rho_{\mathrm{disc}})
=d\bigl(2g(C)-2\bigr)
=2d
=2g-4.
\]
Thus Lemma~\ref{lem:local} gives a nonempty smooth two-dimensional
Noether--Lefschetz locus.

Finally, the VHS on \(X_{z,w}\) is not uniformizing.  Indeed, its period
map factors through the branched degree-\(d\) map $f_{z,w}: X_{z,w}\to C,$ and has two simple branch points.  Equivalently,
\[
 e(\bar\rho_{\mathrm{disc}})
 =2g-4
 <2g-2,
\]
whereas a uniformizing representation has maximal Euler number
\(2g-2\).  We have therefore produced a $\mathbb Q$PVHS with
discrete monodromy which is not uniformizing.  This proves
Proposition~\ref{prop:discrete-example}.


\begin{thebibliography}{99}

\bibitem{AbramovichVistoli02}
D.~Abramovich and A.~Vistoli,
\emph{Compactifying the space of stable maps},
J. Amer. Math. Soc. \textbf{15} (2002), no.~1, 27--75.

\bibitem{BaldiLam26}
G.~Baldi and Y.~H.~J.~Lam,
\emph{Murphy's law in non-abelian Hodge theory},
arXiv:2608.02875v1, 2026.

\bibitem{CDK}
E.~Cattani, P.~Deligne, and A.~Kaplan,
\emph{On the locus of Hodge classes},
J. Amer. Math. Soc. \textbf{8} (1995), no.~2, 483--506.

\bibitem{CTW}
B.~Collier, J.~Toulisse, and R.~Wentworth,
\emph{Higgs bundle, isomonodromic leaves and minimal surfaces},
arXiv:2512.07152v1, 2025.

\bibitem{CorletteSimpson08}
K.~Corlette and C.~Simpson,
\emph{On the classification of rank-two representations of quasiprojective fundamental groups},
Compos. Math. \textbf{144} (2008), no.~5, 1271--1331.

\bibitem{EngelTayou26}
P.~Engel and S.~Tayou,
\emph{On the non-abelian Hodge locus I},
Selecta Math. (N.S.) \textbf{32} (2026), Art.~80.

\bibitem{FuSheng}
Y.~Fu and M.~Sheng,
\emph{Nonabelian Kodaira--Spencer maps},
arXiv:2509.06050v2, 2026.

\bibitem{Goldman88}
W.~M.~Goldman,
\emph{Topological components of spaces of representations},
Invent. Math. \textbf{93} (1988), no.~3, 557--607.

\bibitem{Hitchin87}
N.~J.~Hitchin,
\emph{The self-duality equations on a Riemann surface},
Proc. London Math. Soc. (3) \textbf{55} (1987), no.~1, 59--126.

\bibitem{HSYZ}
T.~Hu, R.~Sun, J.~Yang, and K.~Zuo,
\emph{Isomonodromic deformations of Higgs bundles and characterization of the non-abelian Noether--Lefschetz locus},
arXiv:2606.18768v2, 2026.

\bibitem{HSZ}
T.~Hu, R.~Sun, and K.~Zuo,
\emph{Non-abelian Kodaira--Spencer map and non-existence of holomorphic isomonodromic deformation of Higgs bundles over Teichm\"uller spaces},
arXiv:2512.15478v3, 2026.

\bibitem{LPT}
F.~Loray, J.~V.~Pereira, and F.~Touzet,
\emph{Representations of quasi-projective groups, flat connections and transversely projective foliations},
J. \'Ec. polytech. Math. \textbf{3} (2016), 263--308.

\bibitem{Milnor58}
J.~Milnor,
\emph{On the existence of a connection with curvature zero},
Comment. Math. Helv. \textbf{32} (1958), 215--223.

\bibitem{Scott83}
P.~Scott,
\emph{The geometries of $3$-manifolds},
Bull. London Math. Soc. \textbf{15} (1983), no.~5, 401--487.

\bibitem{Simpson91}
C.~T.~Simpson,
\emph{The ubiquity of variations of Hodge structure},
in \emph{Complex Geometry and Lie Theory (Sundance, UT, 1989)},
Proc. Sympos. Pure Math., vol.~53,
Amer. Math. Soc., Providence, RI, 1991, pp.~329--348.

\bibitem{Simpson92}
C.~T.~Simpson,
\emph{Higgs bundles and local systems},
Publ. Math. Inst. Hautes \'Etudes Sci. \textbf{75} (1992), 5--95.

\bibitem{Simpson97}
C.~T.~Simpson,
\emph{The Hodge filtration on nonabelian cohomology},
in \emph{Algebraic Geometry---Santa Cruz 1995},
Proc. Sympos. Pure Math., vol.~62, part~2,
Amer. Math. Soc., Providence, RI, 1997, pp.~217--281.

\bibitem{Takeuchi71}
K.~Takeuchi,
\emph{Fuchsian groups contained in $SL_2(\Q)$},
J. Math. Soc. Japan \textbf{23} (1971), 82--94.

\bibitem{Troyanov91}
M.~Troyanov,
\emph{Prescribing curvature on compact surfaces with conical singularities},
Trans. Amer. Math. Soc. \textbf{324} (1991), no.~2, 793--821.

\bibitem{UY86}
K.~Uhlenbeck and S.-T.~Yau,
\emph{On the existence of Hermitian--Yang--Mills connections in stable vector bundles},
Comm. Pure Appl. Math. \textbf{39} (1986), S257--S293.

\bibitem{Wood71}
J.~W.~Wood,
\emph{Bundles with totally disconnected structure group},
Comment. Math. Helv. \textbf{46} (1971), 257--273.

\end{thebibliography}
\end{document}